\newtheorem{theorem}{Theorem}[section]
\newtheorem{lemma}[theorem]{Lemma}
\newtheorem{proposition}[theorem]{Proposition}
\newtheorem{corollary}[theorem]{Corollary}
\theoremstyle{definition}
\newtheorem{definition}[theorem]{Definition}
\newtheorem{remark}[theorem]{Remark}
\newtheorem{example}[theorem]{Example}
\numberwithin{equation}{section}
\newcommand{\B}{\mathrm{B}}
\newcommand{\cB}{\overline{\mathrm{B}}}
\newcommand{\sRips}{\mathrm{sRips}}
\newcommand{\Rips}{\mathrm{Rips}}
\newcommand{\NN}{\mathbb{N}}
\newcommand{\RR}{\mathbb{R}}
\newcommand{\FF}{\mathbb{F}}
\newcommand{\M}{\mathcal{M}}
\newcommand{\locmin}{\mathrm{LocMin}}
\newcommand{\e}{\varepsilon}
\newcommand{\f}{\varphi}
\newcommand{\diam}{\operatorname{Diam}\nolimits}
\newcommand{\cRips}{\operatorname{\overline{Rips}}\nolimits}
\newcommand{\rank}{\operatorname{rank}\nolimits}
\newcommand{\mgs}{\operatorname{mgs}\nolimits}
\title[Critical edges in Rips complexes and persistence]
{Critical edges in Rips complexes and persistence}
\author{Peter Goričan}
\address{{Faculty of Mathematics and Physics, University of Ljubljana, Jadranska 21, SI-1000 Ljubljana, Slovenia} and 
{Institute IMFM, Jadranska 19, SI-1000 Ljubljana, Slovenia}}
\email{peter.gorican@imfm.si}
\author{\v Ziga Virk}
\address{{Faculty of Computer and Information Science, University of Ljubljana, Ve\v cna pot 113, SI-1000 Ljubljana, Slovenia} and
{Institute IMFM, Jadranska 19, SI-1000 Ljubljana, Slovenia}}
\email{ziga.virk@fri.uni-lj.si}
\thanks{Research was  supported by Slovenian Research Agency grants No. N1-0114, J1-4001, J1-4031, and P1-0292.}
\keywords{}
\subjclass[2020]{}
\begin{document}

\begin{abstract}
We consider persistent homology obtained by applying homology to the open Rips filtration of a compact metric space $(X,d)$. We show that each decrease in zero-dimensional persistence and each increase in one-dimensional persistence is induced by local minima of the distance function $d$. When $d$ attains local minimum at only finitely many pairs of points, we prove that each above mentioned change in persistence is induced by a specific critical edge in Rips complexes, which represents a local minimum of $d$. We use this fact to develop a theory (including interpretation) of critical edges of persistence. The obtained results include upper bounds for the rank of one-dimensional persistence and a corresponding reconstruction result. Of potential computational interest is a simple geometric criterion recognizing local minima of $d$ that induce a change in persistence. We conclude with a proof that each locally isolated minimum of $d$ can be detected through persistent homology with selective Rips complexes. The results of this paper offer the first interpretation of critical scales of persistent homology (obtained via Rips complexes) for general compact metric spaces.
\end{abstract}

\maketitle
\begin{center}
\today
\end{center}

\renewcommand{\thefootnote}{\fnsymbol{footnote}} 
\footnotetext{Keywords: Persistent homology; Rips complex; Critical simplex; Reconstruction result}     
\footnotetext{MSC 2020: 55N31}
\renewcommand{\thefootnote}{\arabic{footnote}}

\section{Introduction}


Given a metric space $X$ and a scale $r \geq 0$, there are various constructions that assign a simplicial complex to $X$ at scale a $r \geq 0$: Rips complex, \v Cech complex, alpha complex, etc. A collection of any of the mentioned complexes for all $r \geq 0$ yields a filtration, an increasing sequence of simplicial complexes representing $X$ at all scales. In the past hundred years, filtrations were used to study metric spaces from infinitesimal (shape theory) and asymptotic (coarse geometry) point of view. At the turn of the century, filtrations emerged as one of the foundational concepts of persistent homology, which is obtained by applying a homology to a Rips filtration and is a stable descriptor of metric spaces. This point of view is being used to great effect in topological data analysis. Its computational convenience was established with the first persistence algorithm \cite{ELZ} for finite filtrations. A simplified topological idea used in the algorithm is the following: adding an edge to a simplicial complex either decreases $H_0$ or increases $H_1$. Elaborating on this idea we can see that each decrease $H_0$ or increase $H_1$ can be assigned to a specific edge. This fact which is crucial to extract corresponding homology representatives with the aim to identify those geometric features in our space, that generate parts of persistent homology. 

In this paper we study analogous results for persistent homology obtained by applying Rips filtration to a compact metric space. In this setting it is not apparent what the critical edges corresponding to changes in persistence are and whether they exist (in fact, in general they don't). Besides being of theoretical interest of its own, the stability result of persistent homology imply that our results describe the limit of persistent homologies obtained from ever finer finite samples of $X$. As such our results interpret and provide additional structure to practical computations of persistent homology. 

The \textbf{main results} of this paper are:
\begin{itemize}
 \item Theorems \ref{th:1} and \ref{ThmDesc}: Each scale $c$ where $H_0$ decreases or $H_1$ decreases is in the closure of local minima of the distance function $d$.
 \item Theorem \ref{th:1} and Proposition \ref{PropAdd}: When the cardinality of pairs at which $d$ attains a local minimum is finite, each mentioned change in persistent homology corresponds to specific pairs at which $d$ attains a local minimum.
\end{itemize}

Expanding on this context we use our main results to develop a theory of critical edges of persistent homology. Our \textbf{secondary results} include:
\begin{itemize}
 \item Theorem \ref{ThmReconstr}: Reconstruction result for first homology and fundamental group of compact metric spaces.
 \item Corollary \ref{CorAdd1} and Theorem \ref{ThmFG}: Bounds on the change of $H_1$ and $H_0$ at critical scales of persistent homology, and bounds on the ranks of the same groups. These include the first results on finiteness of the rank of $H_1$ for Rips complexes of non-finite spaces at all scales. 
 \item Theorem \ref{ThmConverse}: A simple combinatorial criterion to determine whether the unique pair of points at which $d$ attains an isolated local minimum causes change in persistent homology. 
 \item Theorem \ref{ThmSRips1}: Detecting each strict local minimum of $d$ through persistent homology via selective Rips complexes.
\end{itemize}
These results are complemented by examples demonstrating the necessity of our assumptions. While some of the results of this paper might seem as expected analogues of finite filtrations, the mentioned examples demonstrate that the technicalities of the analogy are far from straightforward. Almost all our results fail to hold in case $X$ is not compact or in case we use closed Rips filtrations instead of open ones. 

While our primary focus is persistent homology, our results are stated and proved so that they also apply to persistent fundamental group. We use term ``persistence'' to encompass persistent homology and persistent fundamental group.
Figure \ref{Fig3} demonstrates our results on a simple example. 

\begin{figure}
    \centering
    \includegraphics{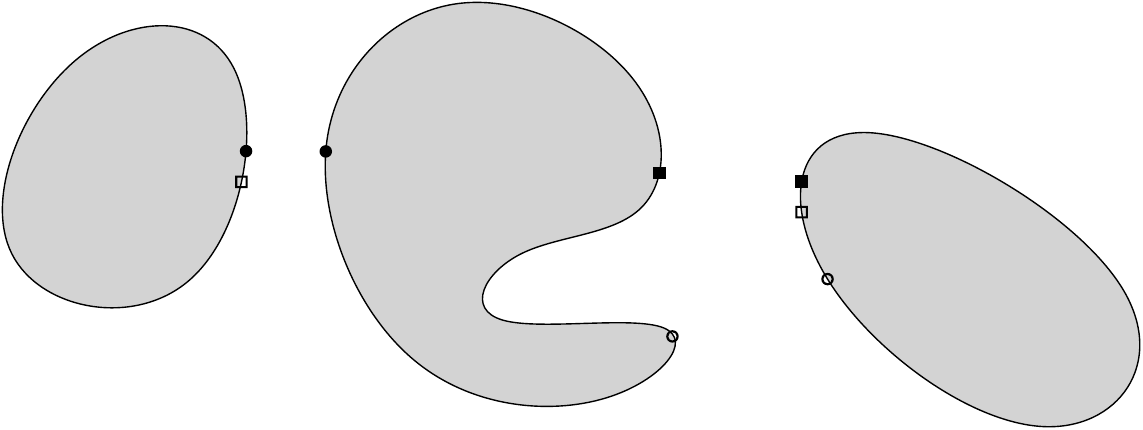}
    \caption{Space $X$ consisting of three components. By Theorems \ref{th:1} and \ref{ThmDesc}, a decrease in $H_0(\Rips(X,r))$ or an increase in $H_1(\Rips(X,r))$ can only be achieved at scales $r$ corresponding to local minima of $d$. Distance function $d$ attains four non-zero local minima at pairs of points of identical appearance. Let us denote the distinct local minima (i.e., the distances between the corresponding points) by $min_\bullet, min_\circ min_\blacksquare,$ and $min_\square$. At $min_\bullet$ and $min_\blacksquare$ two components are merged and thus by Theorems \ref{th:1} and \ref{ThmFG} the only change is decrease in $H_0$. At $min_\circ$ the rank of $H_1$ increases by $1$ by Theorems \ref{ThmDesc}, Theorem \ref{ThmFG}, and \ref{ThmConverse}. At $min_\square$ there is no change to $H_0$ or $H_1$ by Theorem \ref{ThmConverse}, although by using appropriate selective Rips complexes we can see an increase in the rank of $H_1$ by Theorem \ref{ThmSRips2}.}
    \label{Fig3}
\end{figure}
 
\textbf{Related work}.
To the best of our knowledge, specific identifications of simplices inducing a change in persistent homology (via Rips complexes) of non-finite compact spaces has only been carried out in \cite{ZV} and applied in \cite{ZV1}. There it was shown that simplices terminating one-dimensional persistence on compact geodesic spaces are equilateral triangles. On a similar note, critical simplices of \v Cech filtration of a finite collection of points in an Euclidean space were studied in \cite{BauerEdels}. While one-dimensional persistence of nerves was implicitly studied in \cite{Brazas1}, the approach did not utilize specific simplices but rather used Spanier groups. 

There is a growing volume of work studying Rips complexes of simple spaces \cite{AA, Ad5, Feng, Saleh, Shukla} or interpreting parts of persistence diagrams with properties of the underlying space \cite{AC, Lim, ZV, ZV1, ZVCounterex, ZV2}. Reconstruction results using Rips complexes are typically concerned with reconstructing the homotopy type of the underlying space \cite{Haus, ZV3}. To the best of our knowledge, Theorem \ref{ThmReconstr} is the first reconstruction theorem aimed at reconstructing persistent homology at only certain dimension under more general assumptions. 

Our results on the first homology of a Rips complex being finitely generated under certain assumptions complement results in  \cite{Cha2}, which focused on \v Cech complexes. Furthermore,  \cite{Cha2} provided an example of a space, whose closed Rips complex has first homology group of infinite rank. The question of such an example for open Rips complexes was left open. We provide it in Example \ref{Ex3}.

Selective Rips complexes of the last section were first introduced in  \cite{ZV4} for the same reason they are utilized here: to detect more geometric features of a metric space. In \cite{ZV4} the features in question were certain simple closed geodesics, in this paper they are local minima of $d$.

\textbf{Structure of the paper}. 
In Section \ref{SectPrelim} we provide preliminaries. In Section \ref{Sect0Dim} we provide a complete description of zero-dimensional persistence. Section \ref{Sect1Dim} is the most extensive and contains a thorough analysis of emerging homology of one-dimensional persistence, along with a majority of the secondary results. Selective Rips complexes and the way to use them to detect more local minima than with Rips complexes are provided in Section \ref{SectsRips}. Section \ref{SectEx} provides several (counter)examples that demonstrate necessity of our conditions and justify our choices, such as $X$ being compact and use of open Rips complexes instead of closed ones.

\section{Preliminaries}
\label{SectPrelim}

Let $X=(X,d)$ be a metric space with $d\colon X\times X \rightarrow [0,\infty)$. For each $r>0$ and $x\in X$, let $\B(x,r)= \{y\in X \mid d(x,y) < r \}$ be the open ball, and let $\cB(x,r)= \{y\in X \mid d(x,y) \leq r \}$ be the closed ball.
Let $\locmin (d)$ denote the collection of all local minima of the distance function $d$. Note that $0\in \locmin (d)$ if $X \neq \emptyset$. Scale $c$ is an \textbf{isolated local minimum} of the distance function $d$ if it is the only local minimum of $d$ on some open interval containing $c$.

Given a scale $r \geq 0$ the (open) \textbf{Rips complex} $\Rips(X,r)$ is an abstract simplicial complex defined by the following rule: A subset $\sigma \subset X$ is a simplex iff $\diam(\sigma) < r$. In particular, $\Rips(X,0)$ is the empty set and for each $r>0$ the vertex set of $\Rips(X,r)$ is $X$. 

Given a scale $r \geq 0$ the \textbf{closed Rips complex} $\cRips(X,r)$ is an abstract simplicial complex defined by the following rule: A subset $\sigma \subset X$ is a simplex iff $\diam(\sigma) \leq r$.

In what follows we will define persistent homology based on open Rips complexes. Analogue construction can be made with closed Rips complexes, or any other construction yielding a well defined filtration (such as open and closed \v Cech complexes, selective Rips complexes defined in a later section, etc.). On the other hand, we can (and will) perform the same construction with fundamental groups. Our general term ``persistence'' will refer to persistent homology or persistent fundamental group  applied to any filtration, although we will focus on Rips filtrations and its variations.

The \textbf{Rips filtration} of $X$ is the collection of Rips complexes $\{\Rips(X,r)\}_{r \geq 0}$ along with natural inclusions $i_{s,t} \colon \Rips(X,s) \hookrightarrow \Rips(X,t)$ for all $s \leq t$. We will be using $i$ to denote maps of the form $i_{s,t}$ without specifying indices. 

Applying homology $H_q$ with coefficients in $G$ to a filtration we obtain \textbf{persistent homology} consisting of homology groups $\{H_q(\Rips(X,r);G)\}_{r \geq 0}$ and the induced homomorphisms $\{(i_{s,t})_*\}_{s \leq t}.$ In the literature persistent homology is sometimes sometimes denoted as the collection of the ranks of maps $(i_{s,t})_*$. Throughout this paper, persistent homology (or persistent fundamental group) will be the object obtained by applying homology (or fundamental group) to a filtration. The Abelian group $G$ forming coefficients will be considered fixed and omitted from notation. 

Fixing dimension $q\in \{0,1,\ldots\}$ we say a scale $a \geq 0$ is a \textbf{regular value} for $H_q$ if there exists $\e>0$ such that for all $s,t\in (a - \e, a + \e) \cap [0,\infty)$ satisfying $ s \leq t$, the map $(i_{s,t})_*$ is an isomorphism (see \cite{Govc} and \cite{Bubenik} previous appearances of this concept). A scale $a \geq 0$ is a \textbf{critical value} of $H_q$ if it is not a regular value. It will be beneficial if we further distinguish critical values. A scale $a \geq 0$ is an \textbf{emergent-regular value} of $H_q$ if there exists $\e>0$ such that for all $s,t\in (a - \e, a + \e) \cap [0,\infty)$ satisfying $ s \leq t$, the map $(i_{s,t})_*$ is surjective, and is \textbf{emergent-critical} if it is not emergent-regular. Similarly, scale $a \geq 0$ is an \textbf{terminally-regular value} of $H_q$ if there exists $\e>0$ such that for all $s,t\in (a - \e, a + \e) \cap [0,\infty)$ satisfying $ s \leq t$, the map $(i_{s,t})_*$ is injective, and is \textbf{terminally-critical} if it is not terminally-regular. In a similar way we define regular and critical values of persistent fundamental group.

We will refer to the collection of critical values in any of the mentioned cases as the \textbf{spectrum}. For example, the emergent $H_1$ spectrum is the collection of emergent-critical values of $H_1$. Note that by definition each spectrum is a closed subset of $[0,\infty)$. 

 Given a  field $\FF$ and an interval $J \subset [0,\infty) $, the \textbf{interval module} $\FF_{J}$ is a collection of $\FF$-vector spaces $\{V_r\}_{r \in [0,\infty)}$ with 
\begin{itemize}
 \item $V_r = \FF$ for $r\in J$;
 \item $V_r=0$ for $r \notin J$,
\end{itemize}
 and commuting linear bonding maps $V_s \to V_{t}$ which are identities whenever possible (i.e., for $s,t\in J$) and zero elsewhere. When persistent homology of a Rips filtration of a compact metric space $X$ is computed with coefficients in a field $\FF$, it decomposes (uniquely up to permutation of the summands) as a direct sum of interval modules (see q-tameness condition in Proposition 5.1 of \cite{Cha2}, the property of being radical in \cite{ChaObs}, and the main result in \cite{ChaObs} along with its corollaries for details). The  intervals determining the said collection of interval modules are called \textbf{bars}. They form a multiset called \textbf{barcode} of the persistence homology. For each bar, its endpoints form a pair of numbers from $(0,\infty)\cup \{\infty\}$, with the left endpoint being smaller than $\infty$. These pairs form a multiset called a \textbf{persistence diagram}.

\section{Zero-dimensional persistence}
\label{Sect0Dim}

In this section we will analyze critical scales of persistent zero-dimensional homology of a compact metric space $X$ obtained through the Rips filtration. It is apparent that the only emergent-critical value of $H_0$ is zero as $\Rips(X,0)=\emptyset$ while the vertex set of $\Rips(X,r)$ for each $r>0$ is $X$. We thus turn our attention to terminally-critical values.

\begin{lemma}
\label{le:1}
[Finiteness property]
Let $X$ be a compact metric space. Then for every $r>0$, $H_0(\mathrm{Rips}(X,r))$ is finally generated. The rank of $H_0(\mathrm{Rips}(X,r))$ is bounded from above by the number of $r/2$ balls required to cover $X$.
\end{lemma}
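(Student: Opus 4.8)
The plan is to translate the statement about $H_0$ into a statement about connected components and then bound the number of components by a covering argument. Since $H_0(\Rips(X,r))$ is the free module on the set of connected components of $\Rips(X,r)$, its rank equals the number of such components, and ``finitely generated'' is equivalent to ``finitely many components.'' First I would use compactness: the open cover $\{\B(x,r/2)\}_{x\in X}$ of $X$ admits a finite subcover $\B(p_1,r/2),\ldots,\B(p_n,r/2)$, where $n$ can be taken to be the minimal number of $r/2$-balls needed to cover $X$. This is precisely total boundedness of a compact metric space, and it is the only place compactness enters.

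The key geometric observation is that each ball $\B(p_i,r/2)$ lies entirely inside a single connected component of $\Rips(X,r)$. Indeed, for any $x,y\in \B(p_i,r/2)$ the triangle inequality gives $d(x,y)\le d(x,p_i)+d(p_i,y)<r/2+r/2=r$, so $\{x,y\}$ is an edge of $\Rips(X,r)$. Thus every pair of points of the ball is joined by an edge, and the whole ball is contained in one component. This is exactly where the radius $r/2$ and the \emph{open} Rips complex are used: the strict inequalities $d(x,p_i)<r/2$ force $d(x,y)<r$ rather than merely $d(x,y)\le r$, so the same argument would require care for closed balls or closed Rips complexes.

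The counting is then immediate. Assigning to each index $i$ the component containing $\B(p_i,r/2)$ yields a well-defined map from $\{1,\ldots,n\}$ to the set of components. Because the balls cover $X$ and each component is nonempty, this map is surjective: any component contains a point $x$, which lies in some $\B(p_i,r/2)$, and that ball—hence its assigned component—must equal the component of $x$. Therefore the number of components is at most $n$, so $H_0(\Rips(X,r))$ is finitely generated with rank bounded above by the $r/2$-covering number of $X$. I expect the only real subtlety to be the bookkeeping around the strict inequality, i.e.\ confirming that the $r/2$ radius (and not $r$) is what guarantees edges between points sharing a ball; everything else is a routine application of compactness and the fact that connectivity in a simplicial complex is detected by its $1$-skeleton.
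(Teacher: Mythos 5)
Your proof is correct and follows essentially the same route as the paper: compactness yields a finite cover by open $r/2$-balls, and points sharing a ball are identified in $H_0(\Rips(X,r))$, so the classes of the ball centers generate and the rank is bounded by the covering number. The only cosmetic difference is that you phrase the argument via connected components and pairwise edges inside each ball, while the paper simply notes $[x]=[v_j]$ for $x\in\B(v_j,r/2)$; both are the same idea.
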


\begin{proof}
Let be $r > 0$.  $H_0((\Rips(X,r))$ is  is generated by elements of the form $[v_n]$, where $v_n \in X$.
Since $X$ is compact, we can cover it with finitely many open balls of radius $\frac{r}{2}$, i.e., $\bigcup_{j=1}^k \B(v_j,r/2)=X$. 
For each $x\in \B(v_n,r/2)$ we have $[x]=[v_n]\in H_0((\Rips(X,r))$ hence $[v_1], \ldots [v_k]$ is a finite generating set.
\end{proof}

Let $\rho>0$. A \textbf{finite $\rho$-sequence} between points $x,y\in X$ is a sequence $x=x_1, x_2, \ldots, x_p=y$ of points in $X$ such that $d(x_j, x_{j+1})\leq \rho$.
A \textbf{finite strictly $\rho$-sequence} between points $x,y\in X$ is a sequence $x=x_1, x_2, \ldots, x_p=y$ of points in $X$ such that $d(x_j, x_{j+1})<  \rho$.

\begin{definition}
Let $X$ be a compact metric space and $r>0$. 
We define an equivalence relation $\sim_r$ on $X$ by $x\sim_r y \Leftrightarrow [x]=[y] \in H_0(\mathrm{Rips}(X,r))$. Equivalently, $x\sim_r y$ if there is a finite strictly $r$-sequence between them.
\end{definition}

\begin{lemma}
\label{le:1a}
This equivalence classes of $\sim_r$ are open and closed subsets of $X$.
\end{lemma}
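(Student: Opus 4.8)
The plan is to work throughout with the combinatorial characterization of $\sim_r$ recorded in the definition, namely that $x \sim_r y$ precisely when there is a finite strictly $r$-sequence joining them; this is far more tractable than the homological description. The key structural observation is that \emph{openness} of the classes is essentially immediate from the metric, and that \emph{closedness} is then a purely formal consequence of the fact that the classes partition $X$.

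First I would show that every equivalence class is open. Fix a class $C$ and a point $y \in C$. For any $z \in \B(y,r)$ we have $d(y,z) < r$, so the two-term sequence $y, z$ is already a finite strictly $r$-sequence; hence $z \sim_r y$ and therefore $z \in C$. This gives $\B(y,r) \subseteq C$. Since $y \in C$ was arbitrary, $C$ is a union of the open balls $\B(y,r)$ over $y \in C$, and is thus open.

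Closedness then follows with no further geometric input. The relation $\sim_r$ partitions $X$, so the complement $X \setminus C$ of any class $C$ equals the union of all the remaining classes. By the previous step each of those classes is open, so $X \setminus C$ is open as a union of open sets, whence $C$ is closed. The argument has no genuine obstacle; the only points requiring care are to invoke the strictly $r$-sequence characterization (so that a single edge of length $< r$ suffices to absorb an entire open $r$-ball into a class) and to notice that the clopen conclusion rests on the partition structure and the strict inequality in the definition of $\B$, rather than on any compactness of $X$.
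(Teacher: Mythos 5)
Your proof is correct, and for the closedness half it takes a genuinely different (and in fact more general) route than the paper. The openness argument is essentially the same in both: a two-term strictly $r$-sequence absorbs a ball around any point of a class into that class (you use $\B(y,r)$, the paper uses $\B(x,r/2)$; both work). The difference is in closedness. The paper deduces it from Lemma \ref{le:1}: since $X$ is compact there are only finitely many equivalence classes, so the complement of a class is a \emph{finite} union of open sets, hence open. You instead observe that the complement of a class is the union of all the other classes, each of which is open, and an arbitrary union of open sets is open --- no finiteness, no compactness, no appeal to Lemma \ref{le:1}. This is the standard ``a partition into open sets is a partition into clopen sets'' argument, and it shows the lemma holds for every metric space, not just compact ones; your closing remark to that effect is accurate. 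What the paper's version buys is only economy of exposition in context (Lemma \ref{le:1} has already been proved and is reused in one line); what your version buys is independence from compactness and from the preceding lemma, which makes the logical structure cleaner.
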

\begin{proof}
 If $x \in X$ and $y\in \mathrm{B}(x,\frac{r}{2})$, then $[x] =[y]$ and thus the equivalence class $[x]$ is open in $X$. According to the Lemma \ref{le:1} there are only finitely many equivalence classes, so they are also closed.
\end{proof}

\begin{theorem}{}
\label{th:1}
[Geometry of terminally-critical scales]
Let $X$ be a compact metric space. Then:
\begin{enumerate}
    \item The only potential accumulation point of the $H_0$ critical values is $0$.
    \item Each terminally-critical value of persistent $H_0$ (i.e., $\{H_0(\Rips(X,r))\}_{r \geq 0}$) is a local minimum of the distance function $d$.
    \item Assume $a_1 < a_2$ are consecutive $H_0$ terminally-critical values and $r\in(a_1,a_2]$, or $a_1$ is the largest $H_0$ critical value and $r>a_1$. Then points $x,y\in X$ satisfy $x \sim_r y$ iff there is a finite $a_1$-sequence between them.  
\end{enumerate}
\end{theorem}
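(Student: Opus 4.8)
The plan rests on one structural observation about open Rips complexes: since a finite set $\sigma$ satisfies $\diam(\sigma)<r$ iff $\diam(\sigma)<s$ for some $s<r$, we have $\Rips(X,r)=\bigcup_{s<r}\Rips(X,s)$. Hence any edge-path in $\Rips(X,r)$ already lives in some $\Rips(X,s)$ with $s<r$, so the component count $f(r):=\rank H_0(\Rips(X,r))$ is non-increasing, integer-valued, and \emph{left-continuous}, i.e.\ $f(r)=\lim_{s\to r^-}f(s)$. Because the inclusion-induced maps on $H_0$ are always surjective (edges only merge components), for $a>0$ these maps are injective near $a$ iff $f$ is locally constant there; combined with left-continuity this shows that $a$ is terminally-critical iff $f$ drops strictly immediately to the right of $a$, i.e.\ $f(a)>\lim_{t\to a^+}f(t)$. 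This reformulation drives all three parts.

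For (1) I fix $\delta>0$. Lemma \ref{le:1} gives $f(\delta)<\infty$, and monotonicity gives $f(r)\le f(\delta)$ for all $r\ge\delta$. A non-increasing integer-valued function bounded on $[\delta,\infty)$ has only finitely many points of strict decrease, and by the reformulation these are precisely the terminally-critical values in $[\delta,\infty)$, all other points being regular. Letting $\delta\to 0$ shows the critical values can accumulate only at $0$.

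For (2) I may assume $a>0$, since $0\in\locmin(d)$ by convention. Choosing $\e>0$ with no critical values in $(a,a+\e)$, the $\sim_t$-partition is constant on that interval and strictly coarser than the $\sim_a$-partition, so two $\sim_a$-classes $C_1\ne C_2$ are identified for every $t\in(a,a+\e)$. For $t_n=a+1/n$ a connecting strictly-$t_n$ path must contain an edge joining two distinct $\sim_a$-classes, and such an edge has length in $[a,t_n)$ (a shorter edge would keep its endpoints in a single open-Rips class). Finiteness of the classes (Lemma \ref{le:1}) lets me pigeonhole a fixed ordered pair $(C,C')$ of classes, and compactness of the classes lets me extract endpoints converging to $p\in C$, $q\in C'$ with $d(p,q)=a$. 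Since $\sim_a$-classes are open (Lemma \ref{le:1a}), any $(p',q')$ near $(p,q)$ satisfies $p'\sim_a p$ and $q'\sim_a q$; if $d(p',q')<a$ then $p'\sim_a q'$, forcing $p\sim_a q$, a contradiction. Hence $d$ attains a local minimum of value $a$ at $(p,q)$, so $a\in\locmin(d)$.

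Part (3) is the main obstacle; it amounts to showing that for $r\in(a_1,a_2]$ the components of $\Rips(X,r)$ coincide with those of $\cRips(X,a_1)$, whose connectivity is exactly finite (non-strict) $a_1$-sequence connectivity. One inclusion is immediate: a step of length $\le a_1<r$ is a strictly-$r$ edge. For the converse I exploit $\cRips(X,a_1)=\bigcap_{s>a_1}\Rips(X,s)$ together with compactness. There are finitely many $\cRips(X,a_1)$-components (at most $f(a_1)$, each a finite union of closed $\sim_{a_1}$-classes, hence compact), and any two distinct such components $A,B$ satisfy $d(A,B)>a_1$, for otherwise the infimum, attained by compactness, would be a $\le a_1$ edge merging them. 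Setting $\eta:=\min_{A\ne B}d(A,B)>a_1$, for any $r$ with $a_1<r<\eta$ every strictly-$r$ edge has length $<\eta$ and so cannot join two distinct $\cRips(X,a_1)$-components; thus a strictly-$r$ path stays inside one such component, so $x\sim_r y$ implies $x,y$ are joined by a finite $a_1$-sequence. This settles small $r$; since every point of $(a_1,a_2)$ is regular, the $\sim_r$-partition is constant on $(a_1,a_2]$ (with the endpoint $a_2$ handled by left-continuity), so the identity propagates to all $r\in(a_1,a_2]$, and the largest-critical-value case is identical on $(a_1,\infty)$. The delicate points are the uniform gap $\eta>a_1$ obtained from compactness and the transport of the small-$r$ computation across the whole interval via constancy of the partition.
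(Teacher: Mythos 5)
Your proof is correct, but it takes a genuinely different route from the paper's, especially in parts (2) and (3). The paper argues bottom-up, by induction on critical values: fixing a regular scale $r_1>0$ with $\sim_{r_1}$-classes $A_1,\dots,A_n$, it shows that the next terminally-critical value equals $c_1=\min_{j\neq k}d(A_j,A_k)$, attained at a closest pair of points; openness of the classes makes this pair a local minimum of $d$, which proves (2), and the resulting characterization of $a_1$ and $a_2$ as the smallest and second-smallest inter-class distances powers a constructive proof of (3), in which each cross-class step of a strictly-$r$-sequence is rerouted through the closest pair realizing $c_1$. You instead work top-down from a given critical value $a$, after reformulating terminal criticality via the left-continuous component count $f$ (namely, $a$ is terminally-critical iff $f$ drops immediately to the right of $a$): for (2) you harvest crossing edges at scales $a+1/n$, pigeonhole over the finitely many ordered pairs of $\sim_a$-classes, and pass to a limit by compactness, with openness of classes again closing the argument; for (3) you identify $\sim_r$, for $r$ slightly above $a_1$, with edge-path connectivity in $\cRips(X,a_1)$ via the uniform gap $\eta>a_1$ between distinct closed-Rips components, and then transport this identification over all of $(a_1,a_2]$ using regularity on $(a_1,a_2)$ and left-continuity at $a_2$. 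Both proofs rest on the same pillars (Lemma \ref{le:1}, Lemma \ref{le:1a}, and compactness), but they buy different things: the paper's induction yields the sharper structural by-product that the terminally-critical values above any regular scale are exactly the successive minima of inter-class distances, and it produces the required $a_1$-sequence explicitly, whereas your argument is more modular --- the rank-function reformulation cleanly disposes of (1), your proof of (2) never needs to locate the next critical value, and your gap argument for (3), together with the pleasant restatement of (3) as ``$\sim_r$ coincides with $\cRips(X,a_1)$-connectivity,'' avoids any reference to $a_2$ beyond its role as the endpoint of the regular interval.
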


\begin{proof}
Let $r_1>0$ and assume $A_1,A_2,\ldots A_n \subseteq X$ are the finitely many (by Lemma \ref{le:1}) equivalence classes of $\sim_{r_1}$. It is apparent that there at most $n-1$ critical scales of $H_0$ larger than $r_1$, with each of them being a scale at which at least two equivalence classes merge. This implies (1) as $r_1>0$ can was chosen arbitrarily. 
Define 
$$
d(A_j,A_k)=\min\{d(x_j,x_k) | x_j \in A_j, x_k \in A_k \}, \ j,k=1,2,\ldots n.
$$ 
Let be $c_1=\min_{j\neq k}\{d(A_j,A_k)\}$. By Lemma \ref{le:1a} we may assume $c_1=d(A_1,A_2)=d(x_1,x_2)$ for some $x_1\in A_1, x_2\in A_2$. For each $s > c_1 ,\ [A_1]=[A_2] \in  H_0(\mathrm{Rips}(X,s))$.  Furthermore, for each $r \leq c_1, \ [A_j] \neq [A_k] \in H_0((\mathrm{Rips}(X,r))$ for all $j \neq k$ as every pair of points in $X$ at distance less than $r$ is contained in the single class $A_j$ by the definition of $c_1$. Therefore $c_1$ is the first critical values of $H_0$ larger than $r_1$. We claim $c_1$ is a local minimum of the distance function $d$. 

Assume that $d$ does not attain a local minimum at $d(x_1,y_2)$. Then there exist $x'_1 \in A_1$ and $x'_2 \in A_2$ such that $d(x'_1,x'_2)< d(x_1,x_2)$ (because $A_1$ and $A_2$ are open.) This is contradiction because $d(x_1,x _2)=d(A_1,A_2)$. Hence $c_1$ is a local minimum of $d$ attained at a closest pair of points $(x_1, x_2)$.
As there are only finitely many critical scales of $H_0$ larger than $r_1$ we may proceed by induction: set $r_2=c_1$ and repeat the argument for $r_2$ instead of $r_1$, etc. We thus obtained (2). 

In order to prove (3) we may assume $a_1=c_1$. From the argument above observe that $a_1$ is the minimum of $\min_{j\neq k}\{d(A_j,A_k)\}$ while $a_2$ - if finite - is the second smallest number of $\min_{j\neq k}\{d(A_j,A_k)\}$. Since $x \sim_r y$ there is a finite strictly $r$-sequence between them. Each consecutive pair of points $(x'_j, x'_{j+1})$ from this sequence can be replaced by a finite $c_1$-sequence as follows:
\begin{itemize}
    \item If both $x'_j$ and $x'_{j+1}$ are from the same $A_k$, then they may be connected by a finite strictly $c_1$-sequence.
    \item Assume $x'_j \in A_{k_1}$ and $x'_{j+1}\in A_{k_2}$ for $k_1 \neq k_2$. Then $d(A_{k_1},A_{k_2}) < r$ and thus  $d(A_{k_1},A_{k_2}) = c_1$. Without loss of generality we may assume $k_1=1$ and $k_2=2$ given the setting at the beginning of the proof.  Then we may connect:
        \begin{itemize}
            \item $x'_j$ to $x_1$ by a finite strictly $c_1$-sequence.
            \item $x_1$ to $x_2$ by the obvious finite $c_1$-sequence $x_1,x_2$.
            \item $x'_{j+1}$ to $x_2$ by a finite strictly $c_1$-sequence.
        \end{itemize}
\end{itemize}
As a result we obtain a finite $c_1$-sequence from $x$ to $y$ thus (3) holds.
\end{proof}

\begin{remark}
Let us summarize some of the the obtained results:
\begin{enumerate}
    \item The only emergent-critical value of $H_0$ is zero.
    \item Each terminally-critical value of $H_0$ is a local minimum of $d$, i.e., the $H_0$ spectrum is contained in $\locmin(d)$.
    \item The collection of terminally critical values of $H_0$ is either finite or forms a sequence converging towards zero.  
    \item Given a terminally-critical value $c$ of $H_0$ choose $\e>0$ such that no other critical value lies in $(c-2\e, c+2\e)$. Then for each non-trivial $[\alpha] \in \ker i_{c-\e, c+\e} \subset H_0(\Rips(X,c-\e))$, the $0$-chain $\alpha$ is non-trivial in $H_0(\Rips(X, c))$. In particular, $[\alpha]$ (which may, for example, represent the formal difference of two components about to merge at $c$) becomes trivial beyond $c$, but not yet at $c$.
    \item When persistent homology is computed with coefficients from a field, statement (4) implies that all bars of $H_0$ are open at the left endpoint $0$ and closed at the right endpoint.
\end{enumerate}
\end{remark}

\begin{remark}
Statement (1) above holds for any space $X$ for $H_0$, but not for higher-dimensional persistent homology. Statement (2) does not hold if $X$ is not compact: consider the union of the graphs of functions $1 + 1/x$ and $-1-1/x$ for $x>0$, which has a critical value $2$ but the distance function lacks a positive local minimum. Statement (3) follows from Lemma 
\ref{le:1} and also holds for totally bounded spaces, but obviously not in general. Statement (4) holds for any space $X$.

If we are using the closed Rips filtration (upon a compact metric space $X$) instead of the open one, statements (1)-(3) remain the same, while (4) changes: $c$ is the minimal scale at which chain $[\alpha]$ is trivial. Statement (5) changes to: all bars are intervals closed at the left endpoint $0$ and open at the right endpoint.
\end{remark}

\section{One-dimensional persistence}
\label{Sect1Dim}

In this section we will analyze emergent spectrum of persistent $H_1$ and persistent fundamental group of a compact metric space $X$ obtained through the Rips filtration. We first prove that the mentioned spectra are contained in the closure $\overline{\locmin(d)}$ of the local minima of $d$.

\subsection{Geometry of spectra}

\begin{definition}
\label{DefDescent}
Fix $r>0$. Let $(X,d)$ be a compact metric and $x,y\in X$ with $d(x,y)<r$. Choose $\nu< r - d(x,y)$. We say that $(x,y)$ $\nu$-\textbf{descends} (or simply descends) to $(x',y')$ if there are finite $\nu$-sequences $x=x_1, x_2, \ldots, x_p=x'$ and $y=y_1, y_2, \ldots, y_p=y'$ such that for each $j$ we have $d(x_j,y_j) < d(x,y)$, see Figure \ref{Fig1}.
\end{definition}

\begin{figure}
    \centering
    \includegraphics{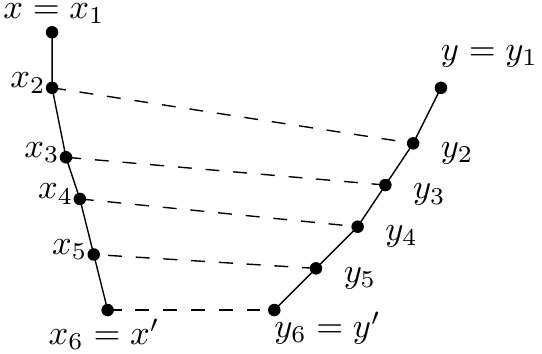}
    \caption{A sketch of a $\nu$-descent. The distances between consecutive points (solid lines) are smaller than $\nu$. The dashed distances are smaller than $d(x,y)$.  }
    \label{Fig1}
\end{figure}

The concept of descending will allow us to replace an edge in a simplicial loop (or a homology cycle) with a sequence of shorter edges without changing the homotopy (homology) type. Descending Lemma below states that we can always descend so that the lengths of the obtained edges are at most the first local minimum of $d$ smaller than $r$ if such a local minimum exists.  To that end we introduce the following notation.

Let $\rho>0$. A \textbf{finite $\rho$-cycle} is a finite $\rho$-sequence $x=x_1, x_2, \ldots, x_p=x$ of points in $X$ (in particular, $d(x_j, x_{j+1})\leq \rho$). A finite $\rho$-cycle will often be identified with a cycle in $\cRips(X,\rho)$ defined as $\sum_{j=1}^{p-1} \langle x_j, x_{j+1} \rangle$.
A \textbf{finite strictly $\rho$-cycle} is a finite strictly $\rho$-sequence $x=x_1, x_2, \ldots, x_p=x$ of points in $X$ (in particular, $d(x_j, x_{j+1}) < \rho$). A finite $\rho$-cycle will often be identified with a cycle in $\Rips(X,\rho)$ defined as $\sum_{j=1}^{p-1} \langle x_j, x_{j+1} \rangle$.

Let $\bullet$ be a basepoint in $X$ and all its Rips complexes unless explicitly stated otherwise.

\begin{lemma}  
\label{LemmaDesc} 
[Descending Lemma]
Assume $X$ is a compact metric space and let $r > 0$. 
\begin{enumerate}
 	\item If $c > 0$ is the only local minimum of the distance function $d$ on the interval $[c,r)$, then:
	\begin{enumerate}
 		\item Each pair of points $x,y\in X$ with $d(x,y)<r$ descends 	to a pair of points $(x',y')$ at distance at most $c$.
  		\item For each $1$-cycle $\alpha$ in $\Rips(X,r)$ there exists a finite $c$-cycle $\alpha'$ in $X$ such that $[\alpha']=[\alpha]\in H_1(\Rips(X,r)).$
		\item Each based simplicial loop $\alpha$ in $\Rips(X,r)$ there exists a based simplicial loop $\alpha'$ in $\cRips(X,c)$ such that $\alpha'\simeq \alpha \ \mathrm{ rel } \bullet $ in  $\Rips(X,r).$
	\end{enumerate}
\item If for some $a \geq 0$ the distance $d$ has no local minima on the interval $(a,r)$, then for each $\e > 0$:
\begin{enumerate}
 	\item Each pair of points $x,y\in X$ with $d(x,y)<r$ descends 	to a pair of points $(x',y')$ at distance at most $a + \e$. 
	\item For each $1$-cycle $\alpha$ in $\Rips(X,r)$ there exists a finite $(a + \e)$-cycle $\alpha'$ such that $[\alpha']=[\alpha]\in H_1(\Rips(X,r)).$
		\item Each based simplicial loop $\alpha$ in $\Rips(X,r)$ there exists a based simplicial loop $\alpha'$ in $\cRips(X, a + \e)$ such that $\alpha'\simeq \alpha \ \textrm{ rel } \bullet $ in  $\Rips(X,r).$
\end{enumerate}
\end{enumerate}
\end{lemma}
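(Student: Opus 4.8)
The plan is to prove the Descending Lemma by first establishing the core geometric claim about descending pairs of points (the (a) statements), and then bootstrapping this into the statements about $1$-cycles (b) and simplicial loops (c). The two cases (1) and (2) are parallel, and I expect case (2) to follow from case (1) by a limiting/approximation argument, so I would concentrate effort on case (1)(a) and then show how (b) and (c) are corollaries.

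\textbf{Step 1: the descent of a single pair (1)(a).} Given $x,y$ with $d(x,y) < r$, I would consider the infimum of distances $d(x',y')$ over all pairs $(x',y')$ reachable from $(x,y)$ by a $\nu$-descent (for a fixed $\nu < r - d(x,y)$). The idea is to run a ``gradient-descent'' argument on the distance function $d$: whenever the current pair $(x',y')$ has $d(x',y') > c$ and is not a local minimum, compactness lets me find nearby points (within $\nu$ of each current endpoint) that strictly decrease the distance, which I append to the descending sequences. Because $c$ is assumed to be the \emph{only} local minimum of $d$ on $[c,r)$, any pair at distance strictly between $c$ and $r$ cannot itself be a local minimum, so descent can always continue until the distance drops to at most $c$. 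The delicate point is to guarantee that this process actually \emph{terminates} at distance $\le c$ rather than merely approaching some value asymptotically; I would handle this using compactness of $X$ together with the isolation of $c$ as the sole local minimum on $[c,r)$, arguing that the set of achievable distances cannot accumulate at a non-minimum value. This is the step I expect to be the main obstacle, since it is where the precise hypothesis on local minima does its real work and where a naive infimum argument could stall.

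\textbf{Step 2: from pairs to cycles (1)(b).} Given a $1$-cycle $\alpha = \sum \langle x_j, x_{j+1}\rangle$ in $\Rips(X,r)$, every edge $\langle x_j, x_{j+1}\rangle$ has length $< r$, so by Step 1 each endpoint-pair $(x_j, x_{j+1})$ descends to a pair at distance $\le c$. The plan is to replace each such edge by the ``descended'' path: the two $\nu$-sequences from the definition of descent, together with the short final edge, bound a sequence of triangles (each triangle has two sides shorter than $d(x_j,x_{j+1}) < r$ and one side shorter than $\nu < r$, hence all three vertices lie in a common simplex of $\Rips(X,r)$). Summing these triangle-fillings shows $[\alpha] = [\alpha']$ in $H_1(\Rips(X,r))$, where $\alpha'$ uses only edges of length $\le c$, i.e. is a finite $c$-cycle. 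The routine check here is that each ``descent step'' is a genuine chain homotopy inside $\Rips(X,r)$, which follows because the diameters involved stay below $r$.

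\textbf{Step 3: the fundamental-group version (1)(c), and case (2).} Statement (1)(c) is the $\pi_1$-analogue of (1)(b): I would run the identical triangle-filling argument at the level of based simplicial loops, observing that each triangle gives an elementary homotopy rel the basepoint $\bullet$, so the original loop $\alpha$ is homotopic rel $\bullet$ in $\Rips(X,r)$ to a loop $\alpha'$ whose edges have length $\le c$, hence $\alpha'$ lies in $\cRips(X,c)$. Finally, for case (2), where $d$ has \emph{no} local minimum on $(a,r)$, I would fix $\e > 0$ and apply case (1) with the role of $c$ played by a value in $(a, a+\e)$: since there are no local minima in $(a,r)$, descent proceeds exactly as in Step 1 but can only stop once the distance drops to at most $a + \e$ (there is no minimum in the open interval to obstruct it, and the target threshold $a+\e$ absorbs the boundary behavior). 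The three conclusions (a), (b), (c) of case (2) then transfer verbatim from those of case (1) with $c$ replaced by $a+\e$.
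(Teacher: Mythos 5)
Your proposal follows essentially the same route as the paper's proof: the paper formalizes your Step 1 by considering the set $A$ of distances achievable by $\nu$-descent, showing it is an interval whose left endpoint is attained (compactness gives a limit pair, and a descent to a nearby achieved pair is prolonged by one step to reach it) and cannot exceed $c$ (a pair at a non-minimal distance can always be descended one step further), and then it derives (b) and (c) by exactly your triangle-filling replacement, with case (2) proved analogously rather than by literally invoking case (1). The only details to add are that $\nu$ must be chosen at most $c$ (resp. less than $\varepsilon$ when $a=0$) so that the descended cycle is genuinely a finite $c$-cycle (resp. $(a+\varepsilon)$-cycle), and that the ``diagonal'' side of each filling triangle is bounded by $d(x,y)+\nu<r$ rather than by the original edge length.
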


\begin{proof}
(1a) Let $a,b \in X$, $c<d(a,b)<r$, and fix $\nu< r - d(a,b)$. Define set 
$$
A=A(a,b)=\{r'\leq r \ |  \ \exists a',b' \in X \ | \ (a,b) \  \nu\textrm{-descends to } (a',b') \textrm{ and } d(a'b')\leq r' \}.
$$
It is enough to show that $[c,r]\subseteq A$. It is obvious that $A$ is not empty  ($r\in A$) and $A$ is an interval because if $t \in A$ for some $t < r$ then $t' \in A $ for each $t' \in [t,r]$. We proceed by two steps: 
\begin{itemize}
\item[i.] We first prove that if $A=[\rho,r]$ then  $\rho \leq c$.\\
Assume $A=[\rho,r]$ where $\rho > c$. 
There exists a pair $(a',b')$ with $d(a',b')=\rho$, to which $(a,b)$ descends.
Because $\rho$ is not a local minimum we can find $a'',b''\in X$ with $d(a'',b'')< d(a',b') = \rho,\  d(a',a'') < \nu$, and $ d(b',b'')< \nu$. 
Prolonging the mentioned descent by one step using $a''$ and $b''$ we see that $(a,b)$ descends onto $(a'',b''), \ d(a'',b'')\in A$ and thus $A \neq [\rho,r]$.

\item[ii.] We next prove that $A$ is closed at the left endpoint.\\
For  each $n\in \mathbb{N}$ let $(a_n, b_n)$ be a pair in $X$ to which $(a, b)$ descends with $d(a_n, b_n)\leq \rho+ \frac{1}{n}$.
As $X$ is compact the sequences $(a_n)$ and $(b_n)$ have accumulation points $a'$ and $b'$ respectively. Observe that $d(a'b')=\rho$. We claim that $(a, b)$ descends to $(a',b')$. 
Choose $ m \in \mathbb{N}$ such that $d(a_m, a') < \nu$ and $d(b_m, b')<\nu$. We can prolong the descent from $(a, b)$ to $(a_m, b_m)$ by one step to $(a',b')$, which implies $\rho \in A$.
 \end{itemize}
 Following i. and ii. we conclude $[c, r] \subseteq  A$ and thus (1) holds.
 
 (1b) Given a $1$-cycle in $\Rips(X,r)$ replace each its edge $\langle x,y\rangle$ by a finite $c$-sequence $x=x_1, x_2, \ldots, x_p =x', y'=y_p, y_{p-1}, \ldots, y_1=y$ obtained through part (1). The obtained modification preserves the homology class of the $1$-cycle as is evident from Figure \ref{Fig2}. Observe that the sides of the triangles $(x_j, x_{j+1},y_j)$  and $(x_{j+1}, y_j, y_{j+1})$ are at most  $d(x,y)$, $\nu$, and $d(x,y) + \nu$, all of which are smaller than $r$ by the definition of $\nu$. 
 
 (1c) The proof is the same as that of (1b).
 
 The proof of (2) is analogous to that of (1). When $a=0$ we need to choose $\nu < \e$.
 \begin{figure}
    \centering
    \includegraphics{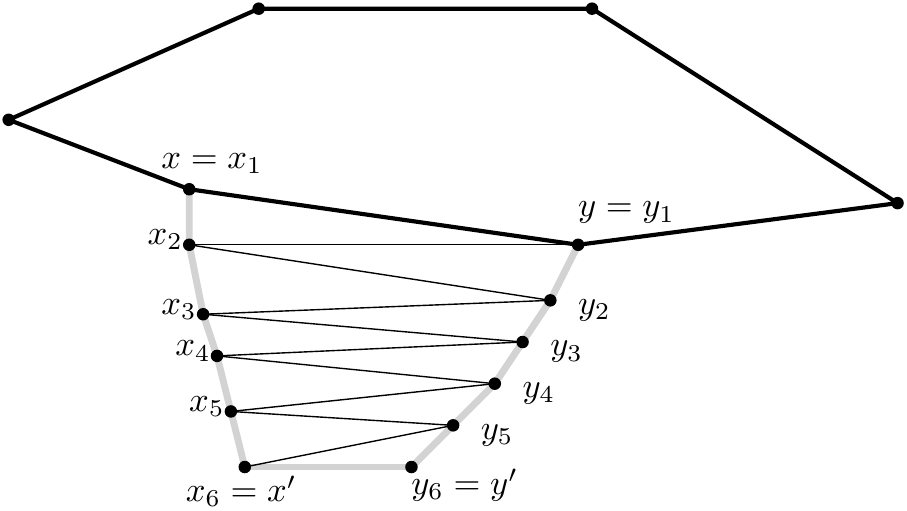}
    \caption{A sketch of statement (1a) of Lemma \ref{LemmaDesc}. Edge $\langle x,y\rangle$ of the bold $1$-cycle is replaced by the grey finite $c$-sequence along the descending finite $\nu$-sequences of part (1a). Such a modification preserves the homology class of a cycle (and based homotopy class of a path) containing the edge $\langle x,y\rangle$.}
    \label{Fig2}
\end{figure}
\end{proof}

\begin{remark}
 Lemma \ref{LemmaDesc} consists of two parts. Part (1) refers to the case when $\locmin(d)\cap [0,r)$ has a maximum (referred to as $c$ in the statement). This is not always the case, see $\{(1/n,\pm (1-1/n))\mid n\in \NN\}\subset \RR^2$. Part (2) considers the case when $\locmin(d)\cap [0,r)$ does not have a maximum (with the supremum being represented by $a$) or when the maximum is zero. 
\end{remark}

\begin{theorem}
\label{ThmDesc} 
Assume $X$ is a compact metric space. Then the emergent $H_1$ spectrum and the emergent $\pi_1$ spectrum are both contained in $\overline{\locmin(d)}$ (when obtained through the open Rips filtration). 
\end{theorem}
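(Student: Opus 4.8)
The plan is to establish the contrapositive: if $a \notin \overline{\locmin(d)}$, then $a$ is simultaneously an emergent-regular value of $H_1$ and an emergent-regular value of $\pi_1$. Since $0 \in \locmin(d) \subseteq \overline{\locmin(d)}$ and the latter set is closed, the assumption $a\notin\overline{\locmin(d)}$ yields a radius $\delta > 0$ with $a - 2\delta > 0$ and $(a - 2\delta, a + 2\delta) \cap \locmin(d) = \emptyset$; in particular $d$ has no local minimum on any subinterval of $(a - 2\delta, a + 2\delta)$. I will show that $\e := \delta$ witnesses emergent-regularity, i.e.\ that $(i_{s,t})_*$ is surjective for all $s \leq t$ lying in $(a - \delta, a + \delta)$, which is exactly the defining condition for $a$ to be emergent-regular.

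For the $H_1$ statement, fix such $s \leq t$ and an arbitrary $1$-cycle $\alpha$ in $\Rips(X,t)$; surjectivity amounts to realizing $[\alpha] \in H_1(\Rips(X,t))$ as the image of a class of $H_1(\Rips(X,s))$. Since $t < a + \delta < a + 2\delta$, the interval $(a - 2\delta, t)$ contains no local minimum of $d$, so I apply part (2b) of the Descending Lemma (Lemma \ref{LemmaDesc}) with the role of its ``$a$'' played by $a - 2\delta$, with $r = t$, and with slack parameter $\e' = \delta/2$. This produces a finite $(a - 2\delta + \e')$-cycle $\alpha'$ with $[\alpha'] = [\alpha]$ in $H_1(\Rips(X,t))$. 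Because every edge of $\alpha'$ has length at most $a - 2\delta + \delta/2 = a - 3\delta/2 < a - \delta < s$, the cycle $\alpha'$ is already a chain in the open complex $\Rips(X,s)$, and under the inclusion-induced map its class is carried to $[\alpha]$. Hence $(i_{s,t})_*$ is surjective and $a$ is emergent-regular for $H_1$.

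The argument for $\pi_1$ has the same shape but invokes part (2c) of Lemma \ref{LemmaDesc}: a based simplicial loop $\alpha$ in $\Rips(X,t)$ is homotopic, rel $\bullet$ and inside $\Rips(X,t)$, to a based loop $\alpha'$ in $\cRips(X, a - 3\delta/2)$. The only additional point is that $\alpha'$ is delivered in the closed complex $\cRips$; however, every simplex of $\cRips(X, a - 3\delta/2)$ has diameter at most $a - 3\delta/2 < s$, so $\cRips(X, a - 3\delta/2) \subseteq \Rips(X,s)$, and therefore $\alpha'$ is a loop in the open complex $\Rips(X,s)$ whose image under $(i_{s,t})_*$ is the class of $\alpha$. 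Thus $(i_{s,t})_*$ is surjective on $\pi_1$ as well, completing both cases.

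The substantive input is entirely the Descending Lemma, so I expect the only genuine subtlety to be the strict-versus-nonstrict bookkeeping forced by working with \emph{open} Rips complexes: the descent is certified homologically (resp.\ homotopically) only inside $\Rips(X,t)$, yet the descended cycle must land \emph{strictly} below the scale $s$ to qualify as a chain in the open complex $\Rips(X,s)$. Choosing the local-minimum-free window of radius $2\delta$ while testing surjectivity only on the window of radius $\delta$, together with the slack $\e' = \delta/2$, is precisely what creates the room to upgrade the $\leq$-bounds supplied by the Descending Lemma into the strict $<$-bounds required for membership in the open complex.
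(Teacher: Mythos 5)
Your proposal is correct and follows essentially the same route as the paper: the paper's proof is a two-line application of part (2) of the Descending Lemma to an interval around $a$ free of local minima, exactly as you do. Your version merely makes explicit the bookkeeping (the $2\delta$ versus $\delta$ windows and the slack $\delta/2$) that the paper leaves implicit when converting the lemma's $\leq$-bounds into membership in the open complexes $\Rips(X,s)$.
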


\begin{proof}
For $b \notin \overline{\locmin(d)}$ there exists $\e>0$ such that $\locmin(d) \cap (b-\e', b+\e') = \emptyset$. By (2) of Lemma \ref{LemmaDesc} the inclusions $\{i_{t,s} \mid b-\e' < t < s < b+ \e'\}$ induced maps on $H_1$ and $\pi_1$ are surjective. 
\end{proof}

\begin{remark}
When $X$ is not compact Theorem \ref{ThmDesc} might not hold. Observe that if $X \subseteq \RR^n$ is open (and non-empty) then $\locmin(d)=\{0\}$ while persistent homology might be very rich. We also point out that the emergent $H_1$ spectrum might not be contained in emergent $\pi_1$ spectrum as the later only considers loops in the connected component (of the Rips complexes) containing $\bullet$.

When considering persistence obtained through closed Rips filtration ephemeral summands might yield critical values that are not in $\overline\locmin(d)$, see Example \ref{Ex1}. Furthermore, when persistent homology is computed with coefficients from a field, statement (4b) implies that all bars of $H_1$ are closed at the left endpoint if that endpoint is non-zero. If the left endpoint of a bar in $H_1$ is zero, the bar is open at the left endpoint as $\cRips(X,0)$ is a discrete set and thus has trivial $H_1$.
\end{remark}

\subsection{Emergent cycles (loops) and their cardinality}
\label{SubsCard}
In this subsection we prove that one-dimensional homology emerging at a locally isolated local minimum $c$ of $d$ arises by attaching elements of $\M_c$ (see Definition \ref{DefMc}) to $\Rips(X,c)$. We then use this fact to estimate the increase in the rank of $H_1$ at $c$.

\begin{definition}
\label{DefMc}
 For $c\in \locmin(d)$ define 
 $$
\M_c :=\{(x,y) \in X^2 \mid d(x,y)=c, \ d\textrm{ has a local minimum at }(x,y) \}
$$
as the collection of pairs at which $d$ has a local minimum at $d=c$.
\end{definition}

The following are direct consequences of Theorem \ref{ThmDesc} and Lemma \ref{LemmaDesc} .  

\begin{proposition}
\label{PropAdd}
Assume $X$ is a compact metric space. 
Let $c > 0$ be an isolated local minimum of $d$ (i.e., $(c-\e,c+\e) \cap \locmin(d)=\{c\}$) and an emergent critical value of $H_1$.  
 Define
$$
\Rips^*(X,r)=\Rips(X,r) \bigcup_{(x,y)\in \M_c} \langle x,y \rangle.
$$
Then for each $t < s$ within $(c-\e,c+\e)$ the inclusion $\Rips^*(X,t) \hookrightarrow \Rips(X,s)$ induced map on $H_1$ is surjective. 
\end{proposition}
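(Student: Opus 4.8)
The plan is to prove surjectivity directly on cycles: I will show that every $1$-cycle $\alpha$ in $\Rips(X,s)$ is homologous, \emph{within} $\Rips(X,s)$, to a $1$-cycle supported on $\Rips^*(X,t)$, so that its class lies in the image of $\bigl(H_1(\Rips^*(X,t))\to H_1(\Rips(X,s))\bigr)$. Note first that the $\M_c$-edges have length $c$, so for the inclusion even to be defined they must be simplices of $\Rips(X,s)$; accordingly I read the hypothesis with $c<s<c+\e$ (and $c-\e<t<s$). Since a $1$-cycle is a finite chain, I may treat its edges $\langle x,y\rangle$ one at a time.

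Next I would descend each edge $\langle x,y\rangle$ of $\alpha$ via Lemma \ref{LemmaDesc}(1), applicable because $c$ is the only local minimum of $d$ on $[c,s)\subseteq(c-\e,c+\e)$, and I would take the descent parameter $\nu$ small, in particular $\nu<t$ and $\nu<s-d(x,y)$. Replacing $\langle x,y\rangle$ by the resulting ladder path as in part (1b) preserves the class in $H_1(\Rips(X,s))$, since every triangle used has all sides $<s$. The ladder ("rung") edges then have length $<\nu<t$ and so lie in $\Rips(X,t)\subseteq\Rips^*(X,t)$; the only edges that might escape $\Rips(X,t)$ are the "bottom" edges $\langle x',y'\rangle$ at the foot of each descent, of length at most $c$.

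The crucial refinement is to descend each edge all the way to its infimal reachable distance $\rho_0$. By the compactness argument in the proof of Lemma \ref{LemmaDesc} the reachable set $A$ is a closed interval $[\rho_0,s]$, so this infimum is attained by a reachable pair $(x',y')$; moreover $(x',y')$ must be a local minimum of $d$, for otherwise, exactly as in step i of that proof, the descent could be prolonged to a strictly closer pair, contradicting minimality of $\rho_0$. Hence $\rho_0\in\locmin(d)\cup\{0\}$ with $\rho_0\le c$. Since $0\in\locmin(d)$, the hypothesis $(c-\e,c+\e)\cap\locmin(d)=\{c\}$ forces $c-\e\ge 0$, so $(c-\e,c)$ contains no element of $\locmin(d)\cup\{0\}$; therefore either $\rho_0=c$, in which case $(x',y')\in\M_c$ and the bottom edge lies in $\Rips^*(X,t)$, or $\rho_0\le c-\e<t$, in which case the bottom edge has length $\rho_0<t$ and lies in $\Rips(X,t)$. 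Performing this for every edge of $\alpha$ yields a $1$-cycle $\alpha'$ supported entirely on $\Rips^*(X,t)$ with $[\alpha']=[\alpha]\in H_1(\Rips(X,s))$, giving surjectivity. (The same replacement applied to based simplicial loops, via Lemma \ref{LemmaDesc}(1c), handles $\pi_1$.)

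I expect the main obstacle to be precisely this last refinement. The Descending Lemma as stated only guarantees descent to length $\le c$, and a naive descent can leave "stuck" bottom edges of length strictly between $t$ and $c$, or of length exactly $c$ at a pair that is \emph{not} a local minimum — neither of which belongs to $\Rips^*(X,t)$. Resolving this forces me to descend to the infimum and to invoke the compactness/attained-at-a-local-minimum feature internal to the proof of Lemma \ref{LemmaDesc}, and it is exactly here that the assumption that $c$ is an \emph{isolated} local minimum is used, to exclude intermediate local-minimum values in $(c-\e,c)$ and thereby produce the strict gap $\rho_0\le c-\e<t$ needed to enter the open complex $\Rips(X,t)$. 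The hypothesis that $c$ is emergent-critical is not needed for the surjectivity argument itself; it only ensures the statement is not vacuous.
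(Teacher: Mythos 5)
Your proof is correct, and it follows the route the paper intends: the paper gives no argument at all for Proposition \ref{PropAdd}, declaring it a ``direct consequence'' of Theorem \ref{ThmDesc} and Lemma \ref{LemmaDesc}. What you add is in fact necessary: Lemma \ref{LemmaDesc}(1) as stated only descends a pair to distance at most $c$, so a finite $c$-cycle produced by part (1b) can still contain edges of length in $[t,c]$ at pairs that are not local minima, and such edges lie in neither $\Rips(X,t)$ nor $\bigcup_{(x,y)\in\M_c}\langle x,y\rangle$. Your refinement --- descend each edge to the attained infimum $\rho_0$ of reachable distances, observe via the compactness/prolongation argument internal to the proof of Lemma \ref{LemmaDesc} that the attaining pair must be a local minimum of $d$, and then use isolatedness (together with the remark that $0\in\locmin(d)$ forces $c-\e\ge 0$) to conclude either $\rho_0=c$ with the bottom pair in $\M_c$, or $\rho_0\le c-\e<t$ --- is exactly the missing step, and your ladder replacement with all triangles of diameter less than $s$ correctly preserves the class in $H_1(\Rips(X,s))$. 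Your two side observations are also right: the statement is only meaningful for $s>c$ (otherwise the $\M_c$ edges are not simplices of $\Rips(X,s)$ and the inclusion is undefined), and the hypothesis that $c$ is emergent-critical is never used in the surjectivity argument, consistent with the fact that the paper's subsequent Proposition \ref{PropAdd1} invokes the same conclusion without that hypothesis.
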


\begin{proposition}
\label{PropAdd1}
Assume $X$ is a compact metric space. 
Let $c > 0$ be an isolated local minimum of $d$ (i.e., $(c-\e,c+\e) \cap \locmin(d)=\{c\}$) and an emergent critical value of $\pi_1$.  
 Define
$$
\Rips^*(X,r)=\Rips(X,r) \bigcup_{(x,y)\in \M_c} \langle x,y \rangle.
$$
Then for each $t < s$ within $(c-\e,c+\e)$ the inclusion $\Rips^*(X,t) \hookrightarrow \Rips(X,s)$ induced map on $\pi_1$ is surjective. 
\end{proposition}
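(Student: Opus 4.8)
The plan is to mirror the homological argument behind Proposition \ref{PropAdd}, replacing $1$-cycles by based simplicial loops and homology by based homotopy. Surjectivity of the map $\pi_1(\Rips^*(X,t)) \to \pi_1(\Rips(X,s))$ induced by inclusion means precisely that every based simplicial loop $\alpha$ in $\Rips(X,s)$ is homotopic rel $\bullet$, inside $\Rips(X,s)$, to a based loop lying in the subcomplex $\Rips^*(X,t)$. Since a loop is a concatenation of edges, it suffices to replace each edge $\langle x,y\rangle$ of $\alpha$ (so $d(x,y)<s$) by a path from $x$ to $y$ whose edges all lie in $\Rips^*(X,t)$ and which is homotopic to $\langle x,y\rangle$ rel endpoints within $\Rips(X,s)$; concatenating these replacements then yields the desired $\alpha'$.

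The core is a sharpened form of the Descending Lemma. Fixing the edge $\langle x,y\rangle$ and a margin $\nu<\min\{t,\,s-d(x,y)\}$, I would run the descent of Lemma \ref{LemmaDesc}(1a) down to its infimal distance $\rho=\inf A$, where $A$ is the interval appearing in that proof. I first claim $\rho\in\locmin(d)\cup\{0\}$: if $\rho>0$ and $d$ had no local minimum at the bottom pair $(x',y')$ (with $d(x',y')=\rho$), then arbitrarily close to $(x',y')$ there would be a pair at distance $<\rho$, and since $\nu$ is small this pair extends the descent by one step, contradicting minimality of $\rho$. Combining this with the bound $\rho\le c$ (established in the proof of Lemma \ref{LemmaDesc}) and the isolation hypothesis $\locmin(d)\cap(c-\e,c+\e)=\{c\}$, the value $\rho$ cannot lie in $(c-\e,c)$, so either $\rho\le c-\e<t$ or $\rho=c$. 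Hence the descent reaches a pair $(x',y')$ with either $d(x',y')<t$, or $d(x',y')=c$ and $(x',y')$ a local minimum, i.e. $(x',y')\in\M_c$.

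The replacement path for $\langle x,y\rangle$ then consists of the two descending $\nu$-sequences together with the bottom edge $\langle x',y'\rangle$. Each $\nu$-sequence edge has length $<\nu<t$, hence lies in $\Rips(X,t)\subseteq\Rips^*(X,t)$, while the bottom edge lies in $\Rips^*(X,t)$ by the previous paragraph: it is either an edge of length $<t$, or a length-$c$ edge in $\M_c$, which is exactly why the edges of $\M_c$ were adjoined. As in the proof of Lemma \ref{LemmaDesc}(1b)--(1c), the triangles realizing the homotopy from $\langle x,y\rangle$ to this path have all side lengths bounded by $d(x,y)+\nu<s$, so the homotopy takes place in $\Rips(X,s)$ and fixes the endpoints $x,y$; in particular it fixes $\bullet$ whenever $\bullet\in\{x,y\}$. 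Concatenating these replacements over all edges of $\alpha$ produces a based loop $\alpha'\subseteq\Rips^*(X,t)$ with $\alpha'\simeq\alpha$ rel $\bullet$ in $\Rips(X,s)$, which gives the asserted surjectivity.

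The step I expect to be the main obstacle is the sharpening of the descent, namely proving that the infimal distance $\rho$ is realized by a genuine local-minimum pair, so that a length-$c$ bottom edge is really an element of $\M_c$, rather than merely invoking the bound $\rho\le c$ of Lemma \ref{LemmaDesc}. This is precisely where both compactness (needed in the proof of Lemma \ref{LemmaDesc}(1a) to see that $A$ is closed at its left endpoint, so that the infimal pair is attained) and the isolation of $c$ enter; without isolation one could only guarantee descent into $\overline{\locmin(d)}$ rather than onto $\M_c$ itself, and the replacement edges need not lie in $\Rips^*(X,t)$.
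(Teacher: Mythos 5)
Your proof is correct and follows the same route the paper intends: the paper offers no separate argument for this proposition, declaring it (together with Proposition \ref{PropAdd}) a direct consequence of Theorem \ref{ThmDesc} and Lemma \ref{LemmaDesc}, and your argument is exactly that descent machinery applied edge by edge to a based loop. The one substantive addition in your write-up is the sharpening of the descent: using compactness to see that the infimal distance $\rho=\inf A$ is attained, and that if the attaining pair were not a local minimum of $d$ the descent could be prolonged, so every edge bottoms out either at distance $<t$ or at a pair of $\M_c$. That refinement is precisely the detail the paper leaves implicit and is genuinely needed, since Lemma \ref{LemmaDesc}(1c) as stated only lands the loop in $\cRips(X,c)$, which need not be contained in $\Rips^*(X,t)$ (an edge of length in $[t,c]$ need not be a local-minimum pair); your handling of it, including the role of the isolation hypothesis in excluding $\rho\in(c-\e,c)$, is correct.
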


Proposition \ref{PropAdd} states that the emergent homology in persistent $H_1$ at a critical point, which is an isolated local minimum, is generated by complementing the Rips complex by the edges corresponding to the pairs of points at which $d$ attains a local minimum with value $c$. On one hand this yields convenient description of emerging cycles (and loops in persistent fundamental group). On the other hand it allows us to estimate the increase of the rank of $H_1$. 

\begin{definition}
 The \textbf{cardinality of the minimal generating set} of a group $G$ will be denoted by $\mgs(G)$.
\end{definition}

 Clearly $\rank(G) \leq \mgs(G)$ so all subsequent upper bounds on $\mgs$ of groups also hold for the rank.

\begin{proposition}
\label{PropAdd1}
Assume $X$ is a compact metric space. 
Let $c > 0$ be an isolated local minimum of $d$. Then for each $t < s$ within $(c-\e,c+\e)$ we have
$\mgs(H_1(\Rips(X,s))) \leq \mgs (H_1(\Rips(X,t))) + |\M_c|$.
\end{proposition}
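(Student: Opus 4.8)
The plan is to sandwich $H_1(\Rips(X,s))$ between the homology of $\Rips(X,t)$ and that of the enlarged complex $\Rips^*(X,t)=\Rips(X,t)\cup\bigcup_{(x,y)\in\M_c}\langle x,y\rangle$ from Proposition \ref{PropAdd}. Concretely, I would establish two facts and chain them. First, a surjection $H_1(\Rips^*(X,t))\twoheadrightarrow H_1(\Rips(X,s))$, which forces $\mgs(H_1(\Rips(X,s)))\le\mgs(H_1(\Rips^*(X,t)))$, since the image of a minimal generating set under a surjection is a generating set of the target. Second, the estimate $\mgs(H_1(\Rips^*(X,t)))\le\mgs(H_1(\Rips(X,t)))+|\M_c|$. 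Combining the two yields the claimed inequality. Here $\e>0$ is fixed so that $(c-\e,c+\e)\cap\locmin(d)=\{c\}$.

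For the second fact I would use that $\Rips^*(X,t)$ is obtained from $\Rips(X,t)$ by attaching the $|\M_c|$ edges $\langle x,y\rangle$, $(x,y)\in\M_c$, and nothing else: no new vertices and no new triangles. Thus in the long exact sequence of the pair $(\Rips^*(X,t),\Rips(X,t))$ the relative group $H_1(\Rips^*(X,t),\Rips(X,t))$ is generated by these $|\M_c|$ edges, so each attached edge raises the minimal number of generators of $H_1$ by at most one; inducting over the edges gives the bound. (For the coefficient groups in play this is the familiar statement that attaching an arc either fuses two components, leaving $H_1$ unchanged, or introduces a single new independent loop.)

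The heart of the argument is the surjection, and I would split on the location of $s$ relative to $c$. If $s\le c$, I claim the plain inclusion already induces a surjection $(i_{t,s})_*\colon H_1(\Rips(X,t))\to H_1(\Rips(X,s))$, so the inequality holds even without the $\M_c$-term. Indeed, let $c'=\max(\locmin(d)\cap[0,s))$; isolation of $c$ (no local minimum lies in $(c-\e,c)$) forces $c'\le c-\e<t$, hence $d$ has no local minimum on $(c',s)$, and the Descending Lemma \ref{LemmaDesc} replaces every $1$-cycle of $\Rips(X,s)$ by a homologous finite cycle all of whose edges are shorter than $t$, i.e. one lying in $\Rips(X,t)$; surjectivity follows. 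If instead $s>c$, then the $\M_c$-edges have diameter $c<s$, so $\Rips^*(X,t)\subseteq\Rips(X,s)$ and I must show the induced map on $H_1$ is onto. When $c$ is an emergent-critical value of $H_1$ this is exactly Proposition \ref{PropAdd}. When $c$ is emergent-regular, every other point of $(c-\e,c+\e)$ lies outside $\overline{\locmin(d)}$ and hence, by Theorem \ref{ThmDesc}, is also emergent-regular; thus the whole interval consists of emergent-regular values, and a compactness argument covering $[t,s]$ by finitely many neighborhoods on which the bonding maps are surjective shows $(i_{t,s})_*$ is surjective. Since this map factors through $H_1(\Rips^*(X,t))$, the latter also surjects onto $H_1(\Rips(X,s))$.

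Taking Proposition \ref{PropAdd} as given, the main obstacle is that its hypotheses do not cover the whole range of $(t,s)$: it assumes $c$ is emergent-critical and, for the inclusion $\Rips^*(X,t)\hookrightarrow\Rips(X,s)$ to be defined, implicitly that $s>c$. The delicate part is therefore to verify surjectivity in the complementary regimes — namely $s\le c$, and $s>c$ with $c$ emergent-regular — which I handle by the Descending Lemma and by the compactness argument above, respectively. A secondary point requiring care is the generator count in the second step for a general coefficient group $G$, where the assertion that one attached edge adds at most one generator must be justified through the long exact sequence of the pair rather than through naive dimension counts.
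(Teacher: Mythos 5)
Your proposal is correct, and at its core it is the same argument the paper gives: the paper's entire proof is the one-line deduction ``Proposition \ref{PropAdd} plus the fact that attaching $|\M_c|$ edges raises $\mgs$ of $H_1$ by at most $|\M_c|$''. What you do differently is to notice that this citation does not literally cover the whole statement --- Proposition \ref{PropAdd} assumes $c$ is an emergent-critical value of $H_1$, and its inclusion $\Rips^*(X,t)\hookrightarrow\Rips(X,s)$ only makes sense when $s>c$ (for $s\le c$ the $\M_c$-edges, of diameter exactly $c$, are not simplices of the open complex $\Rips(X,s)$) --- and to supply the missing regimes: the Descending Lemma argument for $s\le c$, and the chaining-of-surjections compactness argument when $c$ is emergent-regular (using Theorem \ref{ThmDesc} to see that every point of $(c-\e,c+\e)\setminus\{c\}$ lies outside $\overline{\locmin(d)}$ and is therefore emergent-regular). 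Both supplements are sound, so your write-up is a strictly more careful version of the paper's proof rather than a different method. Two small repairs are worth recording. First, in the $s\le c$ case the maximum $\max\bigl(\locmin(d)\cap[0,s)\bigr)$ need not exist; it is cleaner to invoke part (2) of Lemma \ref{LemmaDesc} directly with $a=c-\e$, which is legitimate because $\locmin(d)\cap(c-\e,s)=\emptyset$, choosing $\e'>0$ so that $c-\e+\e'<t$. Second, your caution about the coefficient group is warranted and should be made explicit: the long-exact-sequence argument bounds the new generators by $\mgs$ of a subgroup of $H_1(S^1;G)\cong G$, so the ``$+1$ per edge'' count is valid when $G$ is $\mathbb{Z}$ or a field (subgroups of $\mathbb{Z}$ are cyclic, subspaces of a one-dimensional space are at most one-dimensional), but it genuinely fails for, e.g., $G=\mathbb{Z}^2$, where attaching a single edge inside a component adds a direct summand $G$ and hence two generators; this is a limitation of the proposition as stated in the paper, not of your argument.
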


\begin{proof}
The proof is a direct consequence of Proposition \ref{PropAdd} as adding  $ |\M_c|$ edges to a simplicial complex increases the cardinality of the minimal generating set of its $H_1$ by at most $ |\M_c|$. 
\end{proof}

\begin{remark}
\label{RemAdd1}
The analogue of Proposition \ref{PropAdd1} for the fundamental group holds only if $\Rips(X,r)$ is connected for some $r<c$. While adding an edge to a simplicial complex increases the cardinality of the minimal generating set of $H_1$ by at most $1$, it might increase the rank of cardinality of the minimal generating set of the fundamental group by more than $1$ if the said edge connects different component of the said simplicial complex. 
\end{remark}

The following corollary refines Proposition \ref{PropAdd1}. It states that $|\M_c|$ is the upper bound for the increase of $\mgs$ of $H_1$ at $c$ plus the decrease of $\rank$ of $H_0$ at $c$.

\begin{corollary}
 \label{CorAdd1}
Assume $X$ is a compact metric space. 
Let $c > 0$ be an isolated local minimum of $d$. Then for each $t < s$ within $(c-\e,c+\e)$ we have
$$
\mgs(H_1(\Rips(X,s))) - \mgs (H_1(\Rips(X,t))) +
$$
$$
+ \rank(H_0(\Rips(X,t))) - \rank (H_0(\Rips(X,s))) \leq |\M_c|.
$$
\end{corollary}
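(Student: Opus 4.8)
The plan is to factor the comparison between the scales $t$ and $s$ through the auxiliary complex $\Rips^*(X,t)=\Rips(X,t)\cup\bigcup_{(x,y)\in\M_c}\langle x,y\rangle$ of Proposition \ref{PropAdd}, accounting separately for the effect of adjoining the $|\M_c|$ edges to $\Rips(X,t)$ and for the passage from $\Rips^*(X,t)$ to $\Rips(X,s)$. I may assume $t<c<s$: if $s\le c$ or $c\le t$ then $(t,s)$ meets $\overline{\locmin(d)}$ in at most $\{c\}$ at an endpoint, so by Theorem \ref{ThmDesc} the $H_1$-maps are surjective (whence $\mgs H_1$ does not increase) and by Theorem \ref{th:1} the $H_0$-rank is constant, so the left-hand side is $\le 0\le|\M_c|$. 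In the remaining case every edge $\langle x,y\rangle$ with $(x,y)\in\M_c$ has length $c<s$, so $\Rips(X,t)\subseteq\Rips^*(X,t)\subseteq\Rips(X,s)$, all with vertex set $X$.

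For the first step I would use the elementary fact that attaching a set $S$ of $1$-cells to a complex $K$ satisfies
$$
\big[\mgs H_1(K\cup S)-\mgs H_1(K)\big]+\big[\rank H_0(K)-\rank H_0(K\cup S)\big]\le |S|,
$$
proved by adjoining the cells one at a time: an edge whose endpoints lie in one component of the current complex raises $\mgs H_1$ by at most $1$ and leaves $H_0$ fixed, while an edge joining two components lowers $\rank H_0$ by exactly $1$ and leaves $H_1$ fixed, so each cell contributes at most $1$ to the left side. Applying this with $K=\Rips(X,t)$ and $S$ the collection of $\M_c$-edges yields the inequality for the pair $\bigl(\Rips(X,t),\Rips^*(X,t)\bigr)$.

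For the second step I would transport this across $\Rips^*(X,t)\hookrightarrow\Rips(X,s)$ using two facts. On $H_1$, Proposition \ref{PropAdd} gives surjectivity when $c$ is $H_1$-emergent-critical; when it is not, Theorem \ref{ThmDesc} shows $(c-\e,c+\e)$ contains no $H_1$-emergent-critical value, so $H_1(\Rips(X,t))\twoheadrightarrow H_1(\Rips(X,s))$ is surjective and the claim factors through it. Either way $\mgs H_1(\Rips(X,s))\le\mgs H_1(\Rips^*(X,t))$. On $H_0$, I claim the inclusion induces an isomorphism, i.e. $\Rips^*(X,t)$ and $\Rips(X,s)$ have the same connected components. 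Granting this, substituting both relations into the left-hand side of the Corollary turns it into exactly the left-hand side of the first-step inequality, which is $\le|\M_c|$.

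The $H_0$ claim is the crux, and the place where compactness and the isolatedness of $c$ enter. Since both complexes share the vertex set $X$ and $\Rips^*(X,t)\subseteq\Rips(X,s)$, it suffices to connect, inside $\Rips^*(X,t)$, the endpoints of every edge of $\Rips(X,s)$; so let $d(p,q)<s$. Running the descent procedure of Lemma \ref{LemmaDesc} with a fixed $\nu<\min\{t,\,s-d(p,q)\}$, the pair $(p,q)$ descends to a pair $(p^*,q^*)$ realizing a local minimum of $d$, with $\rho_0:=d(p^*,q^*)\le d(p,q)$: the terminal pair must be a local minimum, since otherwise one further descent step would contradict minimality of $\rho_0$, exactly as in steps i--ii of that proof. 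All connecting $\nu$-steps have length $<t$, so $p\sim p^*$ and $q\sim q^*$ already in $\Rips(X,t)$. Finally $\rho_0$ is a local-minimum value with $\rho_0<s<c+\e$; as $\locmin(d)\cap(c-\e,c+\e)=\{c\}$ and $c-\e<t$, either $\rho_0<t$, so $\langle p^*,q^*\rangle$ is an edge of $\Rips(X,t)$, or $\rho_0=c$ and $(p^*,q^*)\in\M_c$, so $\langle p^*,q^*\rangle$ is one of the adjoined edges; the range $[t,c)$ contains no local-minimum value and is excluded. In either case $p^*\sim q^*$ in $\Rips^*(X,t)$, closing the chain $p\sim p^*\sim q^*\sim q$. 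The hard part will be precisely this descent-to-a-local-minimum: one must descend all the way to a genuine local minimum, rather than merely to distance $\le c$ as in the verbatim statement of Lemma \ref{LemmaDesc}, and then use the isolatedness of $c$ to force the terminal distance to be either harmless (below $t$) or an $\M_c$-edge.
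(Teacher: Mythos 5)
Your core argument in the case $t<c<s$ is correct, and it is essentially the paper's argument: factor through $\Rips^*(X,t)$, charge each of the $|\M_c|$ added edges with either one unit of $H_0$-merging or one unit of $\mgs H_1$-growth, use Proposition \ref{PropAdd} (or surjectivity from Theorem \ref{ThmDesc} when $c$ is not $H_1$-emergent-critical) to compare $H_1(\Rips^*(X,t))$ with $H_1(\Rips(X,s))$, and check that no further components merge between $\Rips^*(X,t)$ and $\Rips(X,s)$. The paper's own proof consists only of the edge-counting sentence and leaves this last $H_0$ comparison implicit; your refinement of the descent procedure of Lemma \ref{LemmaDesc} --- the terminal pair of a maximal descent must be a genuine local minimum of $d$, since otherwise the descent could be prolonged, while closedness of the set $A$ at its left endpoint (compactness) guarantees a terminal pair exists --- is a correct way to make that step rigorous, and you rightly flag it as the crux.

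There is, however, a genuine error in your reduction to $t<c<s$: the claim that for $c\le t<s<c+\e$ the $H_1$-maps are surjective, hence the left-hand side is $\le 0$, fails when $t=c$. With open Rips complexes, homology emerging at $c$ appears only at scales strictly greater than $c$ (the edges of length exactly $c$ lie in $\Rips(X,s)$ for $s>c$ but not in $\Rips(X,c)$), so Theorem \ref{ThmDesc} controls the values in $(c,c+\e)$ but says nothing about surjectivity of $i_{c,s}$. Concretely, let $X\subset\RR^2$ be the four vertices of a unit square and $c=1$ (here $\locmin(d)=\{0,1,\sqrt2\}$, so $c$ is isolated): then $\mgs H_1(\Rips(X,1))=0$ while $\mgs H_1(\Rips(X,s))=1$ for $s\in(1,\sqrt2]$, and $\rank H_0$ drops from $4$ to $1$, so for $t=c=1$ the left-hand side equals $4$, not $\le 0$; the corollary survives only because $|\M_1|\ge 4$. (The companion case $s=c$, which you justify the same way, is true but for a different reason: surjectivity of $i_{t,c}$ on $H_1$ follows from Lemma \ref{LemmaDesc}(2) applied with $a=c-\e$, taking the lemma's parameter small enough that $c-\e+\e'<t$, not from Theorem \ref{ThmDesc} alone.) The repair is straightforward: run your main factorization for $t=c$ as well --- Proposition \ref{PropAdd} is stated for every $t<s$ in $(c-\e,c+\e)$, one still has $\Rips^*(X,c)\subseteq\Rips(X,s)$, and your descent argument yields $\rho_0<t$ or $\rho_0=c$ exactly as before, the latter giving an $\M_c$-edge of $\Rips^*(X,c)$ --- so that only $s\le c$ is handled by the trivial bound.
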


\begin{proof}
The proof is a direct consequence of Proposition \ref{PropAdd} as adding  an edge to a simplicial complex either connects two of its components (thus decreasing the rank of $H_0$ by one) or increases the cardinality of the minimal generating set of its $H_1$ one. 
\end{proof}

\subsection{Reconstruction result for $H_1$ and $\pi_1$}

In this subsection we prove that for a wide class of spaces, the Rips complex at small scales captures $H_1$ and $\pi_1$ of the underlying space $X$. Similar results for fundamental groups of nerve complexes have been proved on numerous occasions, including \cite{Brazas1, CC}. On the other hand, reconstruction results for homotopy type of $X$ using Rips complexes have been proved for certain classes of spaces in \cite{Haus, ZV3}. 

Space $X$ is \textbf{simply connected up to scale $R>0$} if for each $x\in X$ and each positive $r<R$ the open $r$-ball around $x$ is simply connected. In particular this means that such a space is locally path connected. Thus, a compact space which is simply connected up to scale $R>0$ consists of finitely many open path-connected components.

\begin{theorem}
\label{ThmReconstr}
Let $(X, \bullet)$ be a based space which is compact and simply connected up to scale $R>0$. Then $\pi_1(\Rips (X,r),\bullet) \cong \pi_1(X, \bullet)$ for each positive $r < R/3$. Furthermore, if $r' \in (r,R/3)$ then the natural inclusions  $i_{r,r'}$ of Rips complexes at scales $r < r' $ induce isomorphisms on fundamental groups.
\end{theorem}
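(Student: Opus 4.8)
The plan is to build two mutually inverse homomorphisms, a \emph{discretization} map $\Phi\colon \pi_1(X,\bullet)\to \pi_1(\Rips(X,r),\bullet)$ and an \emph{edge-path filling} map $\Psi$ in the opposite direction, and then to check $\Psi\circ\Phi=\mathrm{id}$ and $\Phi\circ\Psi=\mathrm{id}$. The only geometric input throughout is that every ball $\B(x,\rho)$ with $\rho<R$ is simply connected, hence path connected; in particular any two paths sharing both endpoints inside such a ball are homotopic rel endpoints. (Since $\pi_1$ only sees the component of $\bullet$, and, as noted before the statement, compactness together with simple connectivity up to scale forces finitely many open path components, there is no loss in working in that one component.)

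To define $\Phi$, take a based loop $\gamma\colon[0,1]\to X$ and, using uniform continuity of $\gamma$, choose a subdivision $0=t_0<\cdots<t_m=1$ with $\operatorname{diam}\gamma([t_i,t_{i+1}])<r$. The vertices $\gamma(t_0),\ldots,\gamma(t_m)$ then form a simplicial loop $L(\gamma)$ in $\Rips(X,r)$, and I set $\Phi[\gamma]=[L(\gamma)]$. Independence of the chosen subdivision follows because inserting one extra sample point $\gamma(s)$ with $s\in(t_i,t_{i+1})$ merely fills in the $2$-simplex $\{\gamma(t_i),\gamma(s),\gamma(t_{i+1})\}$ (all three pairwise distances remain $<r$), so the class is unchanged; any two subdivisions share a common refinement. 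Independence of the homotopy class of $\gamma$ is the standard argument of sampling a null-homotopy $[0,1]^2\to X$ on a sufficiently fine grid. With concatenation respected, $\Phi$ is a homomorphism.

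To define $\Psi$ I use the edge-path group description of $\pi_1(\Rips(X,r),\bullet)$. For each unordered pair $\{a,b\}$ with $d(a,b)<r$ I fix once and for all a connecting path $p_{a,b}$ lying inside $\B(a,r)$ (so within distance $r$ of the endpoint $a$), which exists by path connectivity of balls; an edge loop $v_0,\ldots,v_n=v_0$ is sent to the concatenation $p_{v_0,v_1}*\cdots*p_{v_{n-1},v_0}$. For this to descend to $\pi_1$ I must verify invariance under the two elementary moves: backtracks map to null-homotopic loops by construction, and for each $2$-simplex $\{v,w,z\}$ the triangle loop $p_{v,w}*p_{w,z}*p_{z,v}$ must be null-homotopic in $X$. \emph{This triangle condition is the crux of the proof and the sole place where the factor $1/3$ is used}: since $v,w,z$ are pairwise within $r$ and each connecting path stays within $r$ of one of its endpoints, the whole triangle loop lies in $\cB(v,2r)\subset\B(v,3r)$, and because $3r<R$ this ball is simply connected, so the loop bounds. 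Everything else is a routine packaging of uniform continuity and simple connectivity of small balls.

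It remains to compose the two maps. For $\Phi\circ\Psi=\mathrm{id}$, each path $p_{v_j,v_{j+1}}$ lies in $\B(v_j,r)$, so a fine discretization $w_0=v_j,\ldots,w_l=v_{j+1}$ of it has every $w_i$ within $r$ of $v_j$; coning with apex $v_j$ (each triangle $\{v_j,w_i,w_{i+1}\}$ has all pairwise distances $<r$) shows the discretized path is homotopic to the single edge $\langle v_j,v_{j+1}\rangle$, so $\Phi$ returns the original simplicial loop. For $\Psi\circ\Phi=\mathrm{id}$, a fine enough subdivision forces both the arc $\gamma|_{[t_i,t_{i+1}]}$ and the chosen path $p_{v_i,v_{i+1}}$ to lie in the simply connected ball $\B(v_i,r)$ with the same endpoints, hence to be homotopic rel endpoints; concatenating these homotopies recovers $\gamma$. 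Thus $\Phi$ is an isomorphism. Finally, a subdivision valid at scale $r$ is valid at every larger scale and produces the same simplicial loop, so $(i_{r,r'})_*\circ\Phi_r=\Phi_{r'}$ for $r<r'<R/3$; as both $\Phi_r$ and $\Phi_{r'}$ are isomorphisms, so is $(i_{r,r'})_*$.
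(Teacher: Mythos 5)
Your proof is correct, and it uses the same two constructions as the paper --- filling edges with paths inside small balls, and discretizing loops via uniform continuity --- but organizes them genuinely differently. The paper defines only the filling map $\f\colon \pi_1(\Rips(X,r),\bullet)\to\pi_1(X,\bullet)$ (your $\Psi$, except with paths chosen ad hoc and independence of the choice proved directly, rather than fixed once and fed through the edge-path group) and then proves it bijective: surjectivity is exactly your discretization, while injectivity is the technical heart --- given a simplicial loop whose filling bounds in $X$, the paper triangulates the nullhomotopy $B^2\to X$ finely, maps its vertices back into $\Rips(X,r)$, and uses closed-star arguments to identify the resulting boundary loop with the original one. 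You avoid that disc argument by promoting the discretization to a homomorphism $\Phi$ and checking $\Phi\circ\Psi=\mathrm{id}$ and $\Psi\circ\Phi=\mathrm{id}$; the price is that homotopy-invariance of $\Phi$ (your ``standard grid sampling'' step, which you assert rather than write out) now carries the two-dimensional sampling work that the paper's injectivity argument performs, so the total content is comparable. What your route buys is cleanliness and an explicit inverse: the edge-path-group presentation reduces well-definedness of the filling map to the backtrack and triangle relations, whereas the paper must sample an arbitrary simplicial homotopy $S^1\times I\to\Rips(X,r)$ back into $X$; both proofs use $3r<R$ in exactly the same place, the triangle relation. One small repair is needed: since your reference paths are indexed by unordered pairs, $p_{v_i,v_{i+1}}$ is only guaranteed to lie in the $r$-ball around \emph{one} of its endpoints, so in verifying $\Psi\circ\Phi=\mathrm{id}$ you cannot assert that both the arc $\gamma|_{[t_i,t_{i+1}]}$ and $p_{v_i,v_{i+1}}$ lie in $\B(v_i,r)$; but both lie in $\B(v_i,2r)$, which is still simply connected because $2r<R$, so the rel-endpoints homotopy exists all the same (and in $\Phi\circ\Psi=\mathrm{id}$ one simply cones with apex whichever endpoint's ball contains the path). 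This is cosmetic, not a gap.
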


\begin{proof}
 Define a map $\f=\f_r \colon \pi_1(\Rips(X,r),\bullet)\to \pi_1(X, \bullet)$ by the following rule. If $\alpha$ is a based simplicial loop in $\Rips(X,r)$ given by the sequence of vertices $\bullet = x_1, x_2, \ldots, x_n=\bullet$, define $\f([\alpha])$ as the based homotopy type of the loop $\alpha^X$ obtained as the corresponding concatenation of paths $\psi_k$ in $X$ between points $x_k$ and $x_{k+1}$, where each path $\psi_k$ is contained in $\B(x_k, r)$.
 
We first prove $\f$ is well defined. 
\begin{itemize}
 \item We first show that $\f([\alpha])=[\alpha^X]$ does not depend on the choice of paths $\psi_k$. Assume $\psi'_k$ is a different path from $x_k$ to $x_{k+1}$. Concatenating $\psi_k$ with the reversed path $\psi'_k$ we obtain a loop contained in $\B(x_k, 2r)$. As  $2r < R$ this loop is nullhomotopic. Thus replacing $\psi_k$ by $\psi'_k$ does not change the homotopy type of $\f([\alpha])$ as defined above. 

\item We now show that $\f([\alpha])$ does not depend on the homotopy representative of $[\alpha]$. Assume $\alpha$ and $\beta$ are homotopic based simplicial loops in $\Rips(X,r)$. A homotopy is given by a simplicial map $H$ of a triangulation $\tau$ of $S^1 \times I$ into $\Rips(X,r)$. Define a homotopy $H' \colon S^1 \times I \to X$ on the same triangulation $\tau$ as follows:
\begin{itemize}
 \item For each vertex $v\in \tau$ define $H'(v)=H(v)$. 
 \item Orient all edges in $\tau$. For each oriented edge $\langle a,b \rangle \in \tau$ define $H'$ on $[a,b]$ as a path between $H(a)$ and $H(b)$ in $\B(H(a),r)$.  If $H(a)=H(b)$ choose the constant path. This ensures the obtained homotopy $H'$ is basepoint preserving. 
 \item For each triangle $[a,b,c]\in \tau$ define $H'$ on $[a,b,c]$ as the nullhomotopy in $X$ of the loop $X$ defined by $H'$ on the boundary of $[a,b,c]$. Note that the said loop is contained in the $3r$-ball around one of its vertices (the one from which we can reach the other two vertices along the chosen orientation of the three edges) and the mentioned nullhomotopy exists as $3r < R$.
\end{itemize}
As a result we obtain a based homotopy between $\f([\alpha])$ and $\f([\beta])$.
\end{itemize}
Thus $\f$ is well defined. Furthermore, it is obviously a homomorphism. 

We next prove $\f$ is injective. Let $\alpha$ be a based simplicial loop in $\Rips(X,r)$ given by the sequence of vertices $\bullet = x_1, x_2, \ldots, x_n=\bullet$. Assume $\f([\alpha])$ is contractible. Then there exists a nullhomotopy $H\colon B^2 \to X$ defined on a closed two-dimensional disc $B^2$, whose restriction to the boundary $S^1 = \partial B^2$ is $\alpha^X$. Choose a triangulation $\tau$ of $B^2$ containing vertices $x_1, x_2, \ldots, x_n$ such that for each triangle $T$ in $\tau$ the image of $H(T)$ is contained in a ball of radius $r/2$. Define a simplicial homotopy $H'\colon B^2 \to \Rips(X,r)$, where
\begin{itemize}
 \item the used triangulation on $B^2$ is $\tau$, and
 \item for each vertex $v\in \tau$ define $H'(v)$ to be the vertex in $\Rips(X,r)$ corresponding to the point $H(v)\in X$. 
\end{itemize}
Note that $H'$ is well defined as the vertices of each triangle are a set of diameter less than $r$. Thus $H'$ is a simplicial hullhomotopy of $\alpha'$, which is defined as the restriction of $H'$ to $S^1 = \partial B^2$, in $\Rips(X,r)$. It remains to show that $[\alpha]=[\alpha'] \in \pi_1(\Rips(X,r),\bullet)$. Triangulation $\tau$ restricted to $S^1$ is a refinement of the triangulation on $S^1$ induced by the vertices $x_1, x_2, \ldots, x_n$. Thus the vertices of $\tau$ restricted to $S^1$ are (in a cyclic order so as to determine a simplicial loop $\alpha'$)
$$
\bullet = x_1=x_{1,1}, x_{1,2}, \ldots, x_{1,k_1}= x_2=x_{2,1},  x_{2,2}, \ldots, x_{2,k_2}=x_3=x_{3,1}\ldots, x_n=\bullet.
$$
We claim that for each $j$ the simplicial loop 
$$
x_j=x_{j,1}, x_{j,2}, \ldots, x_{j,k_j}= x_{j+1},x_j
$$
is nullhomotopic in $\Rips(X,r)$. Observe that all the mentioned vertices correspond to points in $\B(x_j,r)$. This means that the mentioned simplicial loop is contained in the closed star of $x_j$ in $\Rips(X,r)$ and is thus contractible. Replacing the portion of the simplicial loop $\alpha'$ between $x_j$ and $x_{j+1}$ by the single edge from $x_j$ to $x_{j+1}$ (as is in $\alpha$) thus preserves the homotopy type. Performing such homotopy-type preserving modification for each $j$ we transform $\alpha'$ into $\alpha$ and conclude the proof of surjectivity.

We now prove $\f$ is surjective. Let $f\colon S^1 \to X$ be a loop based at $\bullet$. For each $t\in S^1$ choose an open interval on $S^1$ containing $t$, such that its image via $f$ is contained in an open $(r/2)$-ball in $X$. By compactness there exists a finite  collection of such intervals covering $S^1$. Denoting the centers of the obtained intervals by $t_j$, we obtain a finite sequence of  points $\bullet = t_1, t_2, \ldots, t_k = \bullet$ appearing in the positive order on $S^1$ and winding around it exactly once, such that for each $k$ the image via $f$ of the closed interval from $t_k$ to $t_{k+1}$ (in the positive direction) is contained in $\B(t_k,r)$. The finite strictly $r$-cycle $\bullet = f(t_1), f(t_2), \ldots, f(t_k) = \bullet$ is a simplicial based loop in $\Rips(X,r)$ whose based homotopy class is mapped to $f$ via $\f$.

Thus $\pi_1(\Rips (X,r),\bullet) \cong \pi_1(X, \bullet)$ for all positive $r < R/3$. As maps $\f_r$ commute with the inclusions induced maps on the fundamental groups of Rips complexes, the second conclusion of the theorem also holds. 
\end{proof}

\begin{theorem}
\label{ThmReconstr1}
Let $X$ be a based space which is compact and simply connected up to scale $R>0$. Then $H_1(\Rips (X,r)) \cong H_1(X)$ for each positive $r < R/3$. Furthermore, if $r' \in (r,R/3)$ then the natural inclusions  $i_{r,r'}$ of Rips complexes at scales $r < r' $ induce isomorphisms on $H_1$.
\end{theorem}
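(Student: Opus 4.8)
The plan is to deduce this homology statement from its fundamental-group counterpart, Theorem \ref{ThmReconstr}, by passing from $\pi_1$ to $H_1$ via the Hurewicz theorem after reducing to the path-connected case. The genuine geometric work is already contained in Theorem \ref{ThmReconstr}; what remains is to organize the component bookkeeping and to move between homotopy and homology with an arbitrary coefficient group $G$.

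First I would establish that, for every $0 < r < R$, the connected components of $\Rips(X,r)$ are in natural bijection with the path-components of $X$. As noted before Theorem \ref{ThmReconstr}, a compact space simply connected up to scale $R$ is locally path connected and decomposes into finitely many open path-connected components $X_\alpha$. If $x,y$ lie in one $X_\alpha$, a path between them is covered by finitely many $r$-balls (uniform continuity on a compact interval), producing a finite strictly $r$-sequence, so $x,y$ lie in one component of $\Rips(X,r)$. Conversely, any edge $\langle z,z'\rangle$ of $\Rips(X,r)$ satisfies $z' \in \B(z,r)$, and $\B(z,r)$ is simply connected, hence path connected, so $z$ and $z'$ lie in the same $X_\alpha$; chaining along a finite strictly $r$-sequence shows no edge joins distinct path-components. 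Hence $\Rips(X,r) = \bigsqcup_\alpha \Rips(X_\alpha,r)$ with each summand connected, and both $H_1(\Rips(X,r))$ and $H_1(X)$ split as the direct sum over $\alpha$ of the corresponding groups of a single path-component. It therefore suffices to treat one component at a time.

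Fix such a component $Y$ (compact, path-connected, simply connected up to scale $R$). By Theorem \ref{ThmReconstr}, $\pi_1(\Rips(Y,r)) \cong \pi_1(Y)$ for $r < R/3$. Since both $Y$ and the simplicial complex $\Rips(Y,r)$ are path-connected, the Hurewicz theorem identifies first integral homology with the abelianization of $\pi_1$, so abelianizing the above isomorphism gives $H_1(\Rips(Y,r);\mathbb{Z}) \cong H_1(Y;\mathbb{Z})$. For a general coefficient group $G$, the universal coefficient theorem provides a natural isomorphism $H_1(-;G) \cong H_1(-;\mathbb{Z}) \otimes G$, the Tor term vanishing because $H_0(-;\mathbb{Z})$ is free; tensoring with $G$ yields $H_1(\Rips(Y,r)) \cong H_1(Y)$. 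Summing over the components $\alpha$ recovers $H_1(\Rips(X,r)) \cong H_1(X)$.

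For the inclusion-induced maps, the inclusion $i_{r,r'}$ restricts to $\Rips(Y,r) \hookrightarrow \Rips(Y,r')$ on each component, since both Rips complexes share the same components for $r,r' < R$. The furthermore clause of Theorem \ref{ThmReconstr} makes this inclusion an isomorphism on $\pi_1$ of each component, whence, by naturality of the Hurewicz homomorphism and of the universal coefficient theorem, an isomorphism on $H_1$ of each component, and therefore on $H_1(\Rips(X,r))$. I expect the only real care to lie in the component correspondence and in checking that naturality survives through Hurewicz and the universal coefficient theorem; the substantive geometric difficulty has already been discharged in Theorem \ref{ThmReconstr}, so no new obstacle of substance arises here.
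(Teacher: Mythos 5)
Your proposal is correct and follows essentially the same route as the paper's proof: decompose $X$ into its finitely many path components, note that $\Rips(X,r)$ splits as the disjoint union of the Rips complexes of the components, apply Theorem \ref{ThmReconstr} together with the Hurewicz theorem componentwise, and sum. You are in fact somewhat more careful than the paper (explicitly verifying the component correspondence, the connectivity of each $\Rips(X_\alpha,r)$, the passage to general coefficients via universal coefficients, and naturality for the inclusion-induced maps), but these are refinements of the same argument rather than a different approach.
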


\begin{proof}
 As $X$ is compact and locally path connected it consists of a finite number of compact path connected components $A_1, A_2, \ldots, A_n$. It follows from definition of $R$ that the distinct components are at distance at least $R$ and thus $\Rips(X,r)$ is a disjoint union of subcomplexes $\Rips(A_j,r)$. Without loss of generality we may assume $\bullet \in A_1$. Theorem  \ref{ThmReconstr1} coupled with Hurewicz theorem states that $H_1(\Rips (A_1,r)) \cong H_1(A_1)$. Changing basepoint to a different component $A_j$ we obtain $H_1(\Rips (A_j,r)) \cong H_1(A_j)$. As $j$ was arbitraty we conclude the isomorphism $H_1(\Rips (X,r)) \cong H_1(X)$. 
 
 The second part follows similarly from the analogous property of the fundamental group.
\end{proof}

\begin{remark}
 Theorems \ref{ThmReconstr} and  \ref{ThmReconstr1} state that initially the persistent $\pi_1$ and $H_1$ are constant and isomorphic to the corresponding invariant of $X$. 
\end{remark}

\subsection{Bounds on the generating sets of $1$-dimensional persistence}
In this subsection we combine the reconstruction results of the previous subsection with the behaviour of $1$-dimensional persistence at critical scales as discussed in Subsection \ref{SubsCard} to provide a global bound on the rank of $H_1$ the fundamental group of Rips complexes.

Given a space $K$ with components $K_1, K_2, \ldots, K_m$ and $x_j\in K_j, \forall j$,  define 
$$
\mathrm{MGS(K)} = \sum_{j=1}^m \mgs(\pi_1(K_j,x_j)).
$$

\begin{remark}
 \label{RemMGS}
 Adding an edge $E$ to a simplicial complex $K$ can increase the fundamental group in two ways (see also Remark \ref{RemAdd1}):
\begin{enumerate}
 \item If both endpoints of $E$ are in the same component $B$, then the addition of $E$ increases the rank of the fundamental group of $B$ (with a basepoint in $B$) by one.
 \item If the endpoints of $E$ are in different components $B_j \neq B_k$, then the addition of $E$ connects the two components and increases the rank of the fundamental group as follows:
 	\begin{enumerate}
 	\item For a basepoint $b\notin B_j \cup B_k$, the rank of $\pi_1(K,b)$ does not change.
	\item For basepoints $b_j\in B_j$ and $b_k \in B_k$,  $\mgs(\pi_1(K, b_j))$ increases by  $\mgs(\pi_1(B_k, b_k))$.
	\end{enumerate}
\end{enumerate}
In particular, adding an edge to a simplicial complex $K$ increases $\mathrm{MGS(K)}$ by at most one. 
\end{remark}

\begin{remark}
\label{RemFinite}
If $Y$ is a compact, connected, locally path connected metric space, then $\mgs(\pi_1(Y))$ is known to be finite, see for example \cite{Dyd}. Consequently, each space $X$, which is compact and simply connected up to scale $R>0$, has finite $\mgs(\pi_1(X))$, $\mathrm{MGS(X)}$ and $\mgs(H_1(X))$. In particular, such a space consists of finitely many compact components, which are simply connected up to scale $R>0$. Applying the first statement of this remark to each of the components (and the Hurewicz's theorem for the homological version) we obtain finiteness of all mentioned invariants.
\end{remark}

\begin{theorem}
\label{ThmFG}
Let $(X, \bullet)$ be a based space which is compact and simply connected up to scale $R>0$. Assume $\locmin(d)$ is finite. If $M_c$ is finite for each $c > 0$, then for each $r >0$:
 $$
 \mgs(H_1(\Rips(X,r))) \leq \mgs (H_1(X)) + \sum_{c<r} |\M_c| < \infty.
 $$
 Furthermore, if $A_1, A_2, \ldots, A_n$ are the path connected components and $x_j\in A_j$, then for each $r>0$:
  $$
 \mathrm{MGS}(\Rips(X,r)) \leq \mathrm{MGS(X)} + \sum_{c<r} |\M_c|< \infty.
 $$
 and in particular, 
 $$
 \mgs(\pi_1(\Rips(X,r),\bullet)) \leq \mathrm{MGS(X)} + \sum_{c<r} |\M_c|< \infty.
 $$
\end{theorem}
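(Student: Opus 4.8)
The plan is to combine the reconstruction results of Theorems \ref{ThmReconstr} and \ref{ThmReconstr1} with the local bounds at critical scales from Propositions \ref{PropAdd} and \ref{PropAdd1}, patching them together by a telescoping argument over the finitely many critical values. For the base case I would record, using Remark \ref{RemFinite}, that $\mgs(H_1(X))$, $\mathrm{MGS}(X)$ and $\mgs(\pi_1(X,\bullet))$ are all finite. Since $\locmin(d)$ is finite, its positive elements form a finite list $c_1 < \cdots < c_m$ of isolated local minima; I fix a base scale $r_0$ with $0 < r_0 < \min(c_1, R/3)$. By Theorems \ref{ThmReconstr1} and \ref{ThmReconstr}, $\mgs(H_1(\Rips(X,r_0))) = \mgs(H_1(X))$, and since distinct components of $X$ lie at distance at least $R$, the complex $\Rips(X,r_0)$ splits as the disjoint union of the $\Rips(A_j,r_0)$, each reconstructing $\pi_1(A_j,x_j)$, so $\mathrm{MGS}(\Rips(X,r_0)) = \mathrm{MGS}(X)$.

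Next I would exploit that, by Theorem \ref{ThmDesc}, the emergent $H_1$ and $\pi_1$ critical values are contained in $\overline{\locmin(d)} = \locmin(d)$ (finite, hence closed), so the only scales at which the relevant invariant can increase are the $c_i$; by the choice of $r_0$ none of them lies below $r_0$. At every regular value the bonding maps are isomorphisms, whence each invariant is constant on each open interval separating consecutive critical values, and on $(0,r_0]$ it agrees with its value on $X$.

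I would then telescope across the critical values with $r_0 < c_i < r$. For $H_1$, Proposition \ref{PropAdd1} bounds the jump across a single isolated local minimum $c_i$ by $\mgs(H_1(\Rips(X,s))) \leq \mgs(H_1(\Rips(X,t))) + |\M_{c_i}|$ for $t < c_i < s$ sufficiently close to $c_i$. For the fundamental group, the analogue of Proposition \ref{PropAdd} shows the emergent part is generated by adjoining the $|\M_{c_i}|$ edges indexed by $\M_{c_i}$, and by Remark \ref{RemMGS} adjoining each edge raises $\mathrm{MGS}$ by at most one, so $\mathrm{MGS}$ grows by at most $|\M_{c_i}|$ across $c_i$. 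Summing these controlled increments over all positive local minima below $r$ (the terms with $c_i \leq r_0$ being vacuous since none exist) produces the first two displayed inequalities, and finiteness is immediate from the finiteness of $\locmin(d)$ and of every $\M_c$. The third inequality follows at once, since $\mgs(\pi_1(\Rips(X,r),\bullet))$ is one of the non-negative summands defining $\mathrm{MGS}(\Rips(X,r))$.

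The step I expect to be the main obstacle is making the telescoping globally valid: one must check that, between $r_0$ and $r$, each invariant changes only at the finitely many isolated local minima and only upward by the controlled amount, so that the purely local estimates of Propositions \ref{PropAdd} and \ref{PropAdd1} assemble into a single global sum with no increase missed or double-counted. This rests on combining Theorem \ref{ThmDesc} (the emergent spectrum sits inside $\locmin(d)$) with constancy of the invariant across regular values, and these must be invoked carefully. A secondary delicate point is the fundamental-group case, where adjoining an edge joining two components can a priori raise the minimal generating set by more than one for a fixed basepoint; this is precisely why the bound is routed through $\mathrm{MGS}$ and Remark \ref{RemMGS} rather than through $\mgs(\pi_1(\cdot,\bullet))$ directly.
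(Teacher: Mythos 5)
Your proposal is correct and follows essentially the same route as the paper's proof: reconstruction (Theorems \ref{ThmReconstr}, \ref{ThmReconstr1}) for the base value at small scales, Theorem \ref{ThmDesc} to confine increases to $\locmin(d)$, Propositions \ref{PropAdd}/\ref{PropAdd1} together with Remark \ref{RemMGS} to bound each jump by $|\M_c|$, routing the fundamental-group bound through $\mathrm{MGS}$, and Remark \ref{RemFinite} for finiteness. Your explicit telescoping over $c_1 < \cdots < c_m$ merely spells out what the paper states implicitly when it says the invariants ``may increase only at $\locmin(d)$.''
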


\begin{proof}
 By Theorem \ref{ThmDesc}  $ \mgs(H_1(\Rips(X,r)))$ may increase only at $\locmin(d)$. For small positive $r$ we have $ \mgs(H_1(\Rips(X,r))) = \mgs (H_1(X))$ by Theorem \ref{ThmReconstr1}. The increase of $ \mgs(H_1(\Rips(X,r)))$ at each point of $\locmin(d)$ is bounded from above by $|\M_c|$ by Proposition \ref{PropAdd1}, which proves the first part of the theorem.
 
By Remark \ref{RemMGS}, Theorem \ref{ThmDesc}, and Theorem \ref{th:1},   $  \mathrm{MGS}(\Rips(X,r))$ may increase only at $\locmin(d)$. Without loss of generality we have $\bullet = x_1$.  For small positive $r$, $\Rips(X,r)$ is the disjoint union of $\Rips(A_1,r), \Rips(A_s,r), \ldots, \Rips(A_n,r)$ and thus  $\pi_1(\Rips(X,r),\bullet) = \pi_1(\Rips(A_1,r),\bullet)$ by Theorem \ref{ThmReconstr}. Hence  $\mgs(\pi_1(\Rips(X,r),\bullet)) = \mgs (\pi_1(A_1,\bullet))$ for small positive $r$ and by extension of the argument, $\mathrm{MGS}(\Rips(X,r)) = \mathrm{MGS} (X)$ for small positive $r$. The increase of $ \mathrm{MGS}(\Rips(X,r))$ at each point of $\locmin(d)$ is generated by adding $|\M_c|$ edges by Proposition \ref{PropAdd1} and thus by Remark \ref{RemMGS}, $ \mathrm{MGS}(\Rips(X,r))$ increases at $c$ by at most $|\M_c|$. 

The last statement holds as $ \mgs(\pi_1(\Rips(X,r),\bullet)) \leq  \mathrm{MGS}(\Rips(X,r)).$ The finiteness of the mentioned invariants follows from Remark \ref{RemFinite}.
\end{proof}

\begin{corollary}
\label{CorFinRankFinal}
 Let $(X, \bullet)$ be a based space which is compact and simply connected up to scale $R>0$, $|\locmin(d)| < \infty$, and  $|M_c| < \infty, \forall c > 0$. Then for each $r$:
 $$
 \rank(H_1(\Rips(X,r))) < \infty \quad\textrm{ and } \quad \rank(\pi_1(\Rips(X,r)), \bullet)<\infty.
 $$
\end{corollary}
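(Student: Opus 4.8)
The plan is to obtain this statement as an immediate consequence of Theorem \ref{ThmFG}, using only the elementary inequality $\rank(G) \leq \mgs(G)$ recorded earlier in the paper. In other words, the substantive work is already done, and the corollary amounts to specializing the minimal-generating-set bounds from $\mgs$ to $\rank$.

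First I would check that the hypotheses match exactly: $(X,\bullet)$ is compact and simply connected up to scale $R>0$, $\locmin(d)$ is finite, and $|\M_c|<\infty$ for every $c>0$. These are precisely the assumptions of Theorem \ref{ThmFG}. Invoking that theorem, for every $r>0$ we have
$$
\mgs(H_1(\Rips(X,r))) \leq \mgs(H_1(X)) + \sum_{c<r}|\M_c|
$$
and
$$
\mgs(\pi_1(\Rips(X,r),\bullet)) \leq \mathrm{MGS}(X) + \sum_{c<r}|\M_c|.
$$
I would then note that both right-hand sides are finite: the terms $\mgs(H_1(X))$ and $\mathrm{MGS}(X)$ are finite by Remark \ref{RemFinite}, while the sum $\sum_{c<r}|\M_c|$ contains only finitely many nonzero summands (one per local minimum below $r$, of which there are finitely many since $\locmin(d)$ is finite), each finite by hypothesis.

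Finally I would pass from $\mgs$ to $\rank$ via the inequality $\rank(G)\leq \mgs(G)$. Applying it with $G = H_1(\Rips(X,r))$ yields $\rank(H_1(\Rips(X,r)))<\infty$, and with $G = \pi_1(\Rips(X,r),\bullet)$ yields $\rank(\pi_1(\Rips(X,r),\bullet))<\infty$, which are exactly the two claimed finiteness statements.

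I do not expect a genuine obstacle here, since all the real content lives in the prior results (Theorem \ref{ThmFG}, and behind it the reconstruction Theorem \ref{ThmReconstr1} together with the per-critical-scale bound of Proposition \ref{PropAdd1}). The only point worth confirming is that the inequality $\rank(G)\leq\mgs(G)$ is being applied validly in both the abelian setting (for $H_1$) and the possibly non-abelian setting (for $\pi_1$); once that is granted, the corollary follows with no further computation.
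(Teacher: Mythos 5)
Your proposal is correct and matches the paper's own proof, which consists of the single line that the statement follows from Theorem \ref{ThmFG}; you have simply made explicit the steps the paper leaves implicit (the finiteness of the right-hand sides via Remark \ref{RemFinite} and the hypothesis on $\locmin(d)$ and $\M_c$, plus the inequality $\rank(G)\leq\mgs(G)$ recorded after the definition of $\mgs$). No gaps.
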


\begin{proof}
 The statement follows from Theorem \ref{ThmFG}.
\end{proof}

Example \ref{Ex3} in the last section demonstrates that $ \rank(H_1(\Rips(X,r)))$ may be infinite if $X$ is a compact metric space.  

\subsection{Combinatorial criterion for spectrum}

Up to this point our results explore and exploit the fact that $H_0$ spectrum and emergent one-dimensional spectrum of a compact metric space are in a sense induced by  members of $\M_c$. It seems to be more complicated to provide a sufficient condition that will imply a particular element of $\M_c$ induces any mentioned critical effect. In this subsection we provide such a condition in case $|\M_c|=1$ for an isolated local minimum $c$. Observe that this case includes every finite $X \subset \RR^n$ in general position.

\begin{theorem}
\label{ThmConverse}
Assume $X$ is a compact metric space. Let $c > 0$ be an isolated local minimum of $d$ and $\M_c= \{(x,y)\}.$ Then the following are equivalent:
\begin{description}
 \item[(a)] $c$ is a member of either $H_0$ spectrum or emergent $H_1$ spectrum of $X$.
 \item[(b)] There does not exist $z\in X \setminus \{x,y\}$ such that $d(z,x) \leq c$ and $d(y,z) \leq c$.
\end{description}
\end{theorem}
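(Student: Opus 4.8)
The plan is to reduce the change in persistence at $c$ to the single edge $e=\langle x,y\rangle$, which by $\M_c=\{(x,y)\}$ is the only length-$c$ edge representing a local minimum, and to read off its effect from the $2$-simplices $\langle x,y,w\rangle$ that fill it. The governing dictionary is that $\langle x,y,w\rangle\in\Rips(X,s)$ iff $d(x,w)<s$ and $d(y,w)<s$, so a witness $z$ as in (b) (with $d(x,z)\le c$, $d(y,z)\le c$) yields a filling triangle $\langle x,y,z\rangle$ for every $s>c$, and conversely, by compactness, any family of filling triangles whose third vertices stay bounded away from $\{x,y\}$ as $s\to c^+$ produces a witness. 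Fixing $t<c<s$ inside the isolating interval $(c-\e,c+\e)$ and factoring $i_{t,s}$ through $\Rips^*(X,t)=\Rips(X,t)\cup e$, Proposition \ref{PropAdd} makes $H_1(\Rips^*(X,t))\to H_1(\Rips(X,s))$ onto; hence, when $x\sim_t y$, emergent-regularity of $H_1$ at $c$ is equivalent to the new class $[\gamma]$, $\gamma=e+\bar P$ with $P$ a path from $x$ to $y$ in $\Rips(X,t)$, lying in $\operatorname{im}(i_{t,s})_*$.

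For $\neg(b)\Rightarrow\neg(a)$ (a witness forces no change) I first dispatch $H_0$ by a component argument. If $H_0$ changed at $c$, Theorem \ref{th:1} would place $x,y$ in distinct components $A_1,A_2$ below $c$ with $c=\min_{j\ne k}d(A_j,A_k)=d(A_1,A_2)$ and all distinct components at distance $\ge c$. Locating the witness $z$ in its component $A_\ell$ and using $d(x,z)\le c$, $d(y,z)\le c$, each of the cases $A_\ell=A_1$, $A_\ell=A_2$, $A_\ell\notin\{A_1,A_2\}$ forces one of these distances to equal the inter-component minimum $c$; since a closest pair between two open components is a local minimum, that pair lies in $\M_c=\{(x,y)\}$, forcing $z\in\{x,y\}$, a contradiction. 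So $H_0$ is regular at $c$, whence the minimal inter-component distance exceeds $c$ and $z$ lies in the common component $A$ of $x$ and $y$. Then the triangle $\langle x,y,z\rangle\in\Rips(X,s)$ gives $[\gamma]=[\ell_{xz}]+[\ell_{zy}]$ with $\ell_{xz}=\langle x,z\rangle-P_{xz}$, $\ell_{zy}=\langle z,y\rangle-P_{zy}$ and $P_{xz},P_{zy}$ paths in $\Rips(X,t)$. If $d(x,z)<c$ (resp. $d(y,z)<c$) the edge already lies in $\Rips(X,t)$; if $d(x,z)=c$ then $(x,z)\notin\M_c$ is not a local minimum, so the Descending Lemma (Lemma \ref{LemmaDesc}, used across the interval where $c$ is the only local minimum) replaces $\langle x,z\rangle$ up to homology by a chain in $\Rips(X,t)$. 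Either way $[\gamma]\in\operatorname{im}(i_{t,s})_*$, so $c$ is emergent-regular for $H_1$ and $\neg(a)$ holds.

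For $(b)\Rightarrow(a)$ (no witness forces a change), if $x\not\sim_t y$ then adding $e$ merges the components of $x$ and $y$, making $c$ terminally-critical for $H_0$, and we are done. Otherwise $x\sim_t y$ and I must show $[\gamma]\notin\operatorname{im}(i_{t,s})_*$. The key input is a separation from compactness: the function $g(w)=\max(d(x,w),d(y,w))$ satisfies $g>c$ off $\{x,y\}$ (this is exactly the no-witness hypothesis), so $g$ attains a minimum $m'>c$ on the compact set $X\setminus(\B(x,c/2)\cup\B(y,c/2))$; for $c<s<m'$ the vertex set $\{g<s\}$ of the link of $e$ lies in the two disjoint balls and splits as $W^x_s\sqcup W^y_s$ with $x\in W^x_s\subseteq\B(x,c/2)$ and $y\in W^y_s\subseteq\B(y,c/2)$. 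Thus every filling triangle is "thin", its third vertex collapsing onto $x$ or onto $y$. From this separation I intend to build a $1$-cocycle on $\Rips(X,s)$ (equivalently a simplicial map to $S^1$) that evaluates nontrivially on $\gamma$ but vanishes on every cycle of $\Rips(X,t)$, exhibiting $[\gamma]\notin\operatorname{im}(i_{t,s})_*$ and making $c$ emergent-critical.

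The hard part is precisely this last construction. A naive indicator of crossing $e$ is not a cocycle, because the thin triangles $\langle x,y,w\rangle$ pair $e$ with a "medium" edge ($\langle y,w\rangle$ for $w$ near $x$, or $\langle x,w\rangle$ for $w$ near $y$) of length just above $c$; the separation $W^x_s\sqcup W^y_s$ is exactly what lets one correct the cochain on these edges, but one must still verify the cocycle identity on all remaining triangles and, in the $S^1$-formulation, bound the $S^1$-increment on every edge of length $<s$ inside $A$. Morally this is the Descending Lemma run backwards: because $(x,y)$ is a genuine local minimum, no filling chain can be pushed below level $c$, so $[\gamma]$ cannot be traded for an old cycle of $\Rips(X,t)$. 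Converting this "no descent past a local minimum" principle into a single homological invariant, uniformly for all small $s>c$, is the main obstacle, and I expect it to need the compactness separation above together with a limiting argument as $s\to c^+$.
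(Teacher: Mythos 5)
Your direction $\neg\textbf{(b)}\Rightarrow\neg\textbf{(a)}$ is correct and is essentially the paper's own proof of $\textbf{(a)}\Rightarrow\textbf{(b)}$: the same $\sim_t$-component analysis handles $H_0$, and for $H_1$ the paper likewise replaces $\langle x,y\rangle$ by $\langle x,z\rangle+\langle z,y\rangle$ via $\partial\langle x,z,y\rangle$ and then pushes the resulting length-$c$ edges (which are not local minima, since $\M_c=\{(x,y)\}$) below scale $c$ with Lemma \ref{LemmaDesc}.

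The direction $\textbf{(b)}\Rightarrow\textbf{(a)}$, however, has a genuine gap: in the main case $x\sim_t y$ you only specify what an obstruction should do and state yourself that constructing it is ``the main obstacle.'' The missing construction is the paper's invariant $N_r$: fix $R<c/4$ small enough that every pair in $\cB(x,R)\times\cB(y,R)$ is at distance $\geq c$ (this is exactly local minimality of $(x,y)$), and for a $1$-chain $\beta=\sum_j\mu_j\sigma_j$ in $\Rips(X,r)$ with $r$ slightly above $c$ let $N_r(\beta)\in G$ be the signed sum of the coefficients of oriented edges running from $\B(x,R/2)$ to $\B(y,R/2)$. The key point you were missing is to count crossings between the two \emph{balls} rather than crossings of the single edge $e$: in your problematic ``thin'' triangle $\langle x,y,w\rangle$ with $w$ near $x$, both $e$ and the medium edge $\langle w,y\rangle$ cross between the balls with cancelling signs, so its boundary has $N_r=0$; a short case analysis shows the only $2$-simplices with $N_r(\partial\,\cdot)\neq 0$ are ``wide'' ones, with one vertex in $\B(x,R/2)$, one in $\B(y,R/2)$, and one at distance $\geq R/2$ from both $x$ and $y$. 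Note also that your separation function $g$ only constrains triangles containing both $x$ and $y$, whereas a filling $2$-chain can contain wide triangles disjoint from $\{x,y\}$; these are the real enemy and $g$ says nothing about them. The paper then argues contrapositively, which removes your ``uniformly for all small $s>c$'' worry: if $c$ were regular, then for each $s=c+1/n$ the class of $\gamma$ agrees in $\Rips(X,s)$ with the image of a cycle $\beta_s$ from a scale $t<c$; since every edge crossing the balls has length $\geq c$, we get $N_s(\beta_s)=0$ and $N_s(\gamma)=1$, so by additivity the $2$-chain bounding $\gamma-\beta_s$ must contain a wide triangle $\{a_s,b_s,z_s\}$. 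Compactness of the three regions yields subsequential limits $a,b,z$ with all pairwise distances $\leq c$ and $d(z,x),d(z,y)\geq R/2$; then $d(a,b)\geq c$ forces $d(a,b)=c$, so $(a,b)$ is a local minimum, hence $(a,b)=(x,y)$ by $\M_c=\{(x,y)\}$, and $z$ is precisely a witness violating \textbf{(b)}. (The same compactness argument shows that under \textbf{(b)} no wide triangles exist for $s$ close to $c$, so your ball-crossing cochain would in fact be a genuine cocycle there; but as the paper's proof shows, one never needs the cocycle property, only additivity of $N_r$ over $2$-chains plus this limiting argument.)
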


\begin{proof}
\textbf{(a)} $\implies$ \textbf{(b)}: Clearly $c$ cannot be in both $H_0$ spectrum and emergent $H_1$ spectrum of $X$ as $\M_c= \{(x,y)\}$ as adding an edge either connects two components or increases the $H_1$ (see Proposition \ref{PropAdd}).

 Let $c$ be a member of $H_0$ spectrum. Then $x \in K_x$ and $y \in K_y$, where $K_x$ and $K_y$ are different connected components of $\Rips(X,r)$ at scale $r$ just before $c$. By Lemma \ref{le:1a} the vertex sets of $K_x$ and $K_y$ in $X$, denoted by $X_x \subset X$ and $X_y \subset Y$ respectively, are compact open subsets in $X$ at distance $c$. Assume there exists $z\in X \setminus \{x,y\}$ such that $d(z,x) \leq c$ and $d(y,z) \leq c$. 
\begin{itemize}
 \item If $z\in X_x$ then $d(y,z) = c$ as the distance between $X_x$ and $X_y$ is $c$. But then $(y,z)$ is a local minimum of $d$ different from $(x,y)$, a contradiction with $|\M_c|=1$.
 \item Case $z\in X_y$ is treated similarly.
 \item Let $z$ be in a different $\sim_c$ equivalent class $X_z\subset X$ than $X_x$ and $X_y$. Then $X_z$ is at distance at least $c$ from $X_x$ and $X_y$ due to definition of $\sim_c$, hence $d(x,z)=c$. Similarly as in the first item above, this implies $(x,z)$ is a local minimum of $d$, a contradiction. 
\end{itemize}
Hence the assumed $z$ may not exist.

Let $c$ be a member emergent $H_1$ spectrum and let $[\alpha]$ be a homology class of a connected cycle emerging at $c$.  By Lemma \ref{LemmaDesc}, $\alpha$ can be chosen as a finite $c$-cycle in $X$ containing a non-zero multiple of oriented edge $ \langle x,y \rangle$. Assume there exists $z\in X \setminus \{x,y\}$ such that $d(z,x) \leq c$ and $d(y,z) \leq c$. Then each occurrence of $ \langle x,y \rangle$ within $\alpha$ can be replaced using $\partial \langle x, z, y\rangle$, in effect replacing edge $\langle x,y \rangle$ with the edges $\langle x,z \rangle$ and $\langle z,y \rangle$, without changing $[\alpha]$. We can now use the procedure of Lemma \ref{LemmaDesc} to replace each edge within $\alpha$ at distance $c$ by a finite strictly $c$-sequence without changing $[\alpha]$ as such pairs are not local minima of $d$. Thus we have constructed a representative of $[\alpha]$ appearing at scale smaller than $c$, hence $[\alpha]$ has not emerged at $c$. As a result, the assumed $z$ can not exist.
 
\textbf{(b)} $\implies$ \textbf{(a)}:
Assume \textbf{(a)} does not hold. 
\begin{enumerate}
 \item As $c$ is not in $H_0$ spectrum, the pair $\{x,y\}$ does not connect two different components in $\Rips(X,c)$. Thus there exists a finite strictly $c$-sequence $x=x_0, x_1, \ldots, x_m=y$. Let $\alpha$ be the simplicial cycle corresponding to the finite $c$-loop $x=x_0, x_1, \ldots, x_m=y, x$.
 \item  By our assumption $[\alpha]$ is nullhomologous in $\Rips(X,r)$ for each $r>c$. 
 
 \item As $c \in \locmin(d)$ and $|\M_c|=1$ we can choose $R < c/4$ such that $d(x',y')> c,\  \forall x' \in \cB(x,R)\setminus \{x\}, \forall y' \in \cB(y,R)\setminus \{y\}$.
 
 \item Define compact sets $A_x= \cB(x,R) \setminus \B(x,R/2)$ and $A_y= \cB(y,R) \setminus \B(y,R/2)$. Note that by compactness and $|\M_c|=1$, there exist real numbers $w_1<w_2$ such that $d(A_x \times \cB(y,R) \cup \cB(x,R) \times A_y) \subseteq [w_1, w_2] \subset (c,\infty).$
 
 \item Choose $\e_0 > 0$ so that for each $x' \in \cB(x,R)$ and $y' \in \cB(y,R)$ with $d(x', y') \in (c, c + \e_0)$ we have $x' \in \B(x,R/2)$ and $y' \in \B(y,R/2)$. If such $\e_0$ didn't exist we would have for each sufficiently large $n \in \NN$, a pair $t_n\in \cB(x,R)$ and $s_n \in \cB(y,R)$ with $d(t_n, s_n) \in (c, c + 1/n)$ and either $t_n \in A_x$ or $s_n \in A_y$. Without loss of generality we could choose a subsequence $(n_j)_{j\in \NN}$ of $\NN$ so that $t_{n_j} \in A_x, \forall j$. Then we have $t=\lim_{j \to \infty} t_{n_j}\in A_x$, $s=\lim_{j \to \infty} s_{n_j}\in \cB(y,R)$, and $d(t,s)=c$,  a contradiction with $|\M_c|=1$.
 
 \item Let $r\in (c,c + \e_0)$. For each $1$-chain $\beta = \sum_{j=1}^k \mu_j, \sigma_j$ in $\Rips(X,r)$ (i.e., $\mu_j \in G$ and $\sigma_j$ is an oriented $1$-simplex in $\Rips(X,r)$) we define the invariant $N_r(\beta)$ as follows. Define $F_r$ as the collection of those indices $j \in \{1,2,\ldots, k\}$ for which the first vertex of  $\sigma_j$ is in $\B(x,R/2)$ and the second is in $\B(y,R/2)$. Similarly, let $L_r$ be the collection of those indices $j \in \{1,2,\ldots, k\}$ for which the second vertex of  $\sigma_j$ is in $\B(x,R/2)$ and the first is in $\B(y,R/2)$. Define 
 $$
 N_r(\beta) = \sum_{j\in F_r} \mu_j - \sum_{j \in L_r} \mu_j \in G.
 $$
 Quantity $ N_r(\beta)$ represents the total amount of weights in $G$ pointing from $\B(x,R/2)$ to $\B(y,R/2)$ along oriented edges of $\beta$.
 
 \item Fix $r\in (c,c + \e_0)$. Observe that $N_r(\alpha)=1$. On the other hand, (2) implies that $\alpha$ is a boundary of a $2$-cycle in $\Rips(X,r)$ and hence the $N_r$ of the said boundary is also $1$. Thus there exists at least one $2$-simplex with non-trivial $N_r$. A simple case analysis shows that this is possible if and only if the $2$-simplex in question has:
\begin{itemize}
 \item One vertex, say $a_r$, in $\B(x,R/2)$.
  \item One vertex, say $b_r$, in $\B(y,R/2)$.
   \item One vertex, say $z_r$, in $X \setminus \Big(\B(x,R/2) \cup \B(y,R/2) \Big )$.
\end{itemize}
As $ \{a_r, b_r, z_r \}$ forms a simplex in $\Rips(X,r)$, the pairwise distances are smaller than $r$. On the other hand, (5) implies $z_r \notin \B(x,R) \cup \B(y,R)$ thus $d(a_r, z_r) \geq R/2$ and $d(b_r, z_r) \geq R/2$.

\item By compactnes of $\B(x,R/2), \B(y,R/2)$, and $X \setminus (\B(x,R) \cup \B(y,R))$, there exists a subsequence $(\ell_j)_{j\in\NN}$ of $\NN$ such that the following limits exist in the corresponding mentioned compact sets:
$$
a=\lim_{j \to \infty} a_{c + 1/\ell_j} \quad b=\lim_{j \to \infty} b_{c + 1/\ell_j}, \quad z=\lim_{j \to \infty} a_{c + 1/\ell_j}.
$$

\item By (3) we have $a=x$ and $b=y$. By the upper bounds on pairwise distances in (7) we have $d(a,z) \leq r $ and  $d(b,z) \leq r $.  Furthermore, the lower bounds in (7) imply $d(a,z) \geq R/2$ and $d(b,z) \geq R/2$, which in particular mean $z\in X \setminus \{x,y\}$. Thus \textbf{(b)} does not hold.
\end{enumerate}
 \end{proof}

If $X$ is connected, then replacing cycles by simplicial loops in the above proof yields the analogous result for emergent $\pi_1$ spectrum. 

\begin{proposition}
\label{PropConversePi}
Assume $X$ is a connected compact metric space. Let $c > 0$ be an isolated local minimum of $d$ and $\M_c= \{(x,y)\}.$ Then $c$ is a member of $H_0$ spectrum or emergent $\pi_1$ spectrum of $X$ iff there exists no $z\in X \setminus \{x,y\}$ such that $d(z,x) \leq c$ and $d(y,z) \leq c$.
\end{proposition}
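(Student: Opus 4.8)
The plan is to reuse the proof of Theorem \ref{ThmConverse} with homology replaced by homotopy throughout, so before describing the two directions I would first record a structural simplification coming from connectivity. By Lemma \ref{le:1a} the classes of $\sim_r$ are clopen and finite in number, so a connected $X$ admits a single $\sim_r$-class for every $r>0$; hence $\Rips(X,r)$ is connected for all $r>0$. Two consequences follow: the $H_0$ spectrum of $X$ contains no positive scale, so the stated disjunction reduces to membership in the emergent $\pi_1$ spectrum, and the basepoint $\bullet$ always lies in the unique component, so $\pi_1(\Rips(X,\cdot),\bullet)$ detects every emergent loop. The homotopical inputs that replace the homological ones are the fundamental-group form of Proposition \ref{PropAdd} and part (1c) of Lemma \ref{LemmaDesc}.

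For the implication that membership in the emergent $\pi_1$ spectrum forbids such a $z$, I would take a based simplicial loop $\alpha$ whose class emerges at $c$. By the fundamental-group form of Proposition \ref{PropAdd} the emergent loops at $c$ are generated by attaching the single edge $\langle x,y\rangle$, so $\alpha$ may be assumed to traverse $\langle x,y\rangle$, and by part (1c) of Lemma \ref{LemmaDesc} it may be taken inside $\cRips(X,c)$. Supposing a point $z$ with $d(z,x)\le c$ and $d(y,z)\le c$ existed, the set $\{x,z,y\}$ has diameter at most $c$ and hence spans a $2$-simplex in $\Rips(X,r)$ for every $r>c$; homotoping $\alpha$ across this triangle replaces each passage along $\langle x,y\rangle$ by $\langle x,z\rangle$ followed by $\langle z,y\rangle$ without altering $[\alpha]$. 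Because $\M_c=\{(x,y)\}$, none of the remaining edges of length exactly $c$ is a local minimum of $d$, so the Descending Lemma lets me shorten each of them to a finite strictly $c$-sequence while preserving the based homotopy class. The outcome is a based simplicial loop representing $[\alpha]$ at a scale strictly below $c$, contradicting emergence.

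For the converse I would assume $c$ lies in neither spectrum and construct $z$. Connectivity gives $x\sim_c y$, hence a finite strictly $c$-sequence $x=x_0,\dots,x_m=y$, and I close it with the edge $\langle y,x\rangle$ to form the loop $\alpha$, exactly as in step (1) of the proof of Theorem \ref{ThmConverse}. Mirroring step (2), the assumption that $c$ is not an emergent $\pi_1$-critical value, via the fundamental-group form of Proposition \ref{PropAdd}, forces $\alpha$ to be nullhomotopic in $\Rips(X,r)$ for every $r>c$. A simplicial nullhomotopy is a simplicial map $H\colon B^2\to\Rips(X,r)$ from a triangulated disc restricting to $\alpha$ on the boundary; pushing the fundamental $2$-chain of the triangulation through $H$ yields a $2$-chain $D$ with $\partial D=\alpha$, so $\alpha$ is in particular nullhomologous. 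From this point steps (3)--(10) of the proof of Theorem \ref{ThmConverse} apply without change: the crossing invariant satisfies $N_r(\alpha)=1=N_r(\partial D)$, some $2$-simplex of $D$ must therefore have one vertex in $\B(x,R/2)$, one in $\B(y,R/2)$ and one outside both balls, and the compactness argument produces the required $z$.

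I expect the main obstacle to be the reverse-direction analogue of step (2), namely upgrading ``not emergent $\pi_1$-critical'' to the genuine nullhomotopy of the specific loop $\alpha$; this is exactly where the fundamental-group form of Proposition \ref{PropAdd} must be applied, together with the fact that converting a nullhomotopy into a bounding $2$-chain is insensitive to the coefficients, so that the integral chain $D$ it produces is precisely what the $N_r$-computation of Theorem \ref{ThmConverse} consumes. The forward direction carries only a milder subtlety: one must verify that the triangle replacement followed by the descent lowers the length of every edge strictly below $c$ while keeping the loop based, which is secured by $\M_c=\{(x,y)\}$ and part (1c) of Lemma \ref{LemmaDesc}.
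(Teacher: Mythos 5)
Your proposal follows exactly the route the paper takes: the paper's entire proof of Proposition \ref{PropConversePi} is the one-line remark after Theorem \ref{ThmConverse}, that replacing cycles by based simplicial loops in that proof yields the $\pi_1$ statement, and that is what you carry out. Your preliminary observation that connectivity of $X$ forces every $\Rips(X,r)$, $r>0$, to be connected (via Lemma \ref{le:1a}), so that the $H_0$ spectrum contains no positive scale and basepoint issues disappear, is correct and a worthwhile addition; your forward direction (triangle replacement through $z$ followed by descent of the remaining non-minimal edges of length $c$) is precisely the paper's argument transplanted to based homotopy classes.

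One step is overstated, in your write-up and, to be fair, in the paper's own step (2) of Theorem \ref{ThmConverse}: the claim that ``$c$ is not emergent $\pi_1$-critical'' forces the specific loop $\alpha$ to be nullhomotopic in $\Rips(X,r)$ for $r>c$. Neither non-emergence nor the $\pi_1$ form of Proposition \ref{PropAdd} delivers this; they give only that $[\alpha]=i_*[\beta]$ for some based loop $\beta$ at a scale $t<c$ (respectively, that $\alpha$ is homotopic to a loop in $\Rips^*(X,t)$, which may still traverse $\langle x,y\rangle$). If $X$ contains, say, a large circle, the strictly-$c$-sequence from $x$ to $y$ can wind around it, and $[\alpha]$ is then genuinely nontrivial even though $c$ is not emergent-critical. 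The patch is one line and leaves the rest of your argument untouched: apply the nullhomotopy-to-$2$-chain conversion to the concatenation $\alpha\cdot\bar{\beta}$ rather than to $\alpha$. Since every edge of $\beta$ has length less than $c$, the choice of $R$ in step (3) guarantees that $\beta$ has no edge joining $\B(x,R/2)$ to $\B(y,R/2)$, hence $N_r(\alpha-\beta)=N_r(\alpha)=1$, and the counting and compactness steps (6)--(9) then produce the required $z$ exactly as you describe.
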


\section{Detecting all local minima of the distance function}
\label{SectsRips}

Theorem \ref{ThmConverse} provides a condition under which certain locally isolated elements of $\locmin(d)$ are detected via persistence. In this section we prove that persistence can in fact detect each member of $\locmin(d)$ of finite $\M_c$, if we use appropriate selective Rips complexes instead of Rips complexes. Selective Rips complexes have been introduced in \cite{ZV4} and represent subcomplexes of Rips complexes with controllably thin simplices (see also \cite{Lem} for a corresponding reconstruction result). The motivation for their construction was to provide a flexible construction of filtrations closely related to Rips filtrations, which  enables us to detect as many geodesic circles (i.e., isometric images of circles equipped with a geodesic metric, inside a geodesic space) as possible using persistence. In this section we use a similar approach to detect $\locmin(d)$.

\begin{definition} \cite{ZV4}
 \label{DefSRips}
Let $Y$ be a metric space, $r_1 \geq r_2, n\in \NN$.  \textbf{Selective Rips complex} $\sRips(Y; r_1, n, r_2)$ is an abstract simplicial complex defined by the following rule: a finite subset $\sigma\subset Y$ is a simplex iff the following two conditions hold:
\begin{enumerate}
 \item $\diam (\sigma) < r_1$;
 \item there exist subsets $U_0, U_1, \ldots,  U_n\subset U$ of diameter less than $r_2$ such that $\sigma \subset U_0 \cup U_1 \cup \ldots \cup U_n$.
\end{enumerate} 
\end{definition} 

The geometric intuition behind Definition \ref{DefSRips} is that  simplices of dimension above $n$ in $\sRips(Y; r_1, n, r_2)$ are very thin, and up to ``distortion'' $r_2$ close to an $n$-dimensional simplex in $\Rips(X,r_1)$. 
Observe that $\sRips(Y; r_1, n, r_2) \leq \Rips(Y,r).$ In this paper we will be using selective Rips complexes of form $\sRips(Y; r_1, 1, r_2)$, which means that $2$-simplices will be thin, i.e., that the shortest side of a $2$-simplex will be smaller than $r_2$. In order to simplify the notation of filtrations by selective Rips complexes we will focus on filtrations of a form 
$$
\mathcal{F}=\{\sRips(Y;r,1,r_2(r))\}_{r \geq 0},
$$
Where $r_2=r_2(r) \colon [0,\infty) \to [0,\infty)$ is a strictly increasing continuous bijection satisfying $r_2(r) \leq r$.

Many of the results of the previous sections also hold for selective Rips complexes:
\begin{itemize}
 \item The $H_0$ persistence of the Rips filtration of $X$ is isomorphic to the $H_0$ persistence of the selective Rips filtration $\mathcal{F}$ as the one dimensional skeletons of $\Rips(X,r)$ and $\sRips(Y;r,1,r_2(r))$ coincide. Thus Theorem \ref{th:1} also holds for selective Rips complexes arising from filtration $\mathcal{F}$.
 \item Lemma \ref{LemmaDesc} also holds for selective Rips complexes arising from filtration $\mathcal{F}$. The reason is that the descending condition of Definition \ref{DefDescent} is established for arbitrarily small positive $\nu$ and is proved as such in 1(a) of Lemma \ref{LemmaDesc}. Consequently, the $2$-simplices used to prove parts 1(b) and 1(c) of Lemma \ref{LemmaDesc} (see Figure \ref{Fig2}) can be taken to be as thin as required by $\mathcal{F}$.
\end{itemize}

As a result we obtain the following generalization of Theorem \ref{ThmDesc}.

\begin{theorem}
\label{ThmDescSRips} 
Assume $X$ is a compact metric space. Then the emergent $H_1$ spectrum and the emergent $\pi_1$ spectrum arising from filtration $\mathcal{F}$ are both contained in $\overline{\locmin(d)}$. 
\end{theorem}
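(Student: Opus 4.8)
The plan is to mirror the proof of Theorem \ref{ThmDesc}, replacing the ordinary Rips filtration by $\mathcal{F}$ and invoking the selective-complex version of the Descending Lemma recorded in the discussion preceding this theorem. I would begin by fixing $b \notin \overline{\locmin(d)}$ and choosing $\e' > 0$ with $\locmin(d) \cap (b - \e', b + \e') = \emptyset$; since $0 \in \locmin(d)$, necessarily $b > 0$. It then suffices to show that for all $t, s$ with $b - \e' < t < s < b + \e'$ the inclusion-induced maps on $H_1$ and on $\pi_1$ between the corresponding complexes of $\mathcal{F}$ are surjective, which is exactly the statement that $b$ is emergent-regular for both invariants.

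To produce surjectivity I would run the descent inside the top complex. Given a $1$-cycle (resp. based loop) $\alpha$ in $\sRips(X; s, 1, r_2(s))$, apply part (2) of Lemma \ref{LemmaDesc} with $r = s$ and with any $a$ satisfying $b - \e' < a < t$; the interval $(a, s)$ lies in $(b - \e', b + \e')$ and so contains no local minimum of $d$. The lemma yields, for each $\e > 0$, a homologous (resp. based-homotopic rel $\bullet$) representative whose edges all have length at most $a + \e$, and choosing $\e < t - a$ makes every such edge shorter than $t$. Because the thinness condition of Definition \ref{DefSRips} constrains only simplices of dimension above $1$, whereas the $1$-skeleton of $\sRips(X; t, 1, r_2(t))$ coincides with that of $\Rips(X, t)$, this short representative is automatically a cycle (resp. loop) of $\sRips(X; t, 1, r_2(t))$ mapping to the class of $\alpha$, which gives the desired surjectivity.

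The one point requiring care — the main, though mild, obstacle — is checking that the two-simplices realizing the homology (resp. homotopy) between $\alpha$ and its descended representative lie in $\sRips(X; s, 1, r_2(s))$, and not merely in $\Rips(X, s)$. Here I would examine the filling triangles from the proof of Lemma \ref{LemmaDesc} (Figure \ref{Fig2}): each triangle $(x_j, x_{j+1}, y_j)$ or $(x_{j+1}, y_j, y_{j+1})$ has a side of length less than the descent parameter $\nu$ and diameter at most $d(x,y) + \nu < s$. For the case $n = 1$, three points are covered by two sets of diameter $< r_2(s)$ exactly when their shortest side is $< r_2(s)$; since $r_2(s) > 0$, choosing $\nu < \min\{r_2(s),\, s - d(x,y)\}$ makes every filling triangle simultaneously thin and of diameter $< s$, hence a genuine simplex of $\sRips(X; s, 1, r_2(s))$. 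Thus the entire descent takes place inside the selective complex at scale $s$, and the argument of Theorem \ref{ThmDesc} carries over unchanged; the $\pi_1$ case is identical, using statement (c) in place of statement (b) of part (2) of the Descending Lemma, with the same thin nullhomotopy triangles.
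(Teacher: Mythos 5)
Your proposal is correct and follows essentially the same route as the paper: the paper proves Theorem \ref{ThmDescSRips} by observing that Lemma \ref{LemmaDesc} carries over to the filtration $\mathcal{F}$ because the descent parameter $\nu$ can be taken arbitrarily small, so the filling triangles of Figure \ref{Fig2} (each having a side of length at most $\nu$) satisfy the thinness condition of Definition \ref{DefSRips}, after which the surjectivity argument of Theorem \ref{ThmDesc} applies verbatim. Your write-up simply makes explicit the choice $\nu < \min\{r_2(s),\, s - d(x,y)\}$ and the coincidence of $1$-skeletons, which is exactly the content of the paper's remarks preceding the theorem.
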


We are now in a position to prove the main result of this section: each member $c \in \locmin(d)$ can be detected by persistence via selective Rips complexes if $\M_c$ is finite. In case $\M_c$ is infinite Example \ref{Ex2} shows that $c$ may not be detected even by selective Rips complexes.

\begin{theorem}
\label{ThmSRips1} 
Assume $X$ is a compact metric space, $c \in \locmin(d)$, and $\M_c$ is finite. There exists a filtration $\mathcal{F}$ with function $r_2$, such that $c$ is either a critical value of $H_0$ or an emergent critical value of $H_1$ arising from filtration $\mathcal{F}$.
\end{theorem}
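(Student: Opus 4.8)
The plan is to reduce to a single pair and then adapt the obstruction argument from the proof of Theorem \ref{ThmConverse}, using the thinness of selective $2$-simplices to excise the one type of filling triangle that obstructs detection by ordinary Rips complexes. Since $c\in\locmin(d)$ we have $\M_c\neq\emptyset$; fix one pair $(x,y)\in\M_c$ and a radius $\delta>0$ with $d\geq c$ on $\B(x,\delta)\times\B(y,\delta)$. As $\M_c$ is finite I shrink to some $R$ with $0<R<\min\{\delta,c/2\}$ so that $\cB(x,R)$ and $\cB(y,R)$ meet $\M_c$ (as a set of points occurring in its pairs) only in $x$ and $y$. Since the $1$-skeleta of $\sRips(X;r,1,r_2(r))$ and $\Rips(X,r)$ coincide, their $H_0$ persistences agree, so the argument splits according to whether $x\sim_s y$ in $\Rips(X,s)$ for $s$ slightly below $c$. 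If not, the edge $\langle x,y\rangle$ first joins the components of $x$ and $y$ at scale $c$, so $c$ is a critical value of $H_0$ and we are done. The remaining, and main, case is $x\sim_s y$ for some $s<c$.

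In that case I fix a finite strictly $s$-sequence from $x$ to $y$ and let $\alpha$ be the finite $c$-cycle obtained by closing it with the edge $\langle x,y\rangle$; the goal is to show $[\alpha]$ emerges exactly at $c$. First I transplant steps (4)--(5) of the proof of Theorem \ref{ThmConverse}. Writing $A_x=\cB(x,R)\setminus\B(x,R/2)$ and $A_y=\cB(y,R)\setminus\B(y,R/2)$, the choice of $R$, compactness, and the local-minimum property force $d>c$, hence $d\geq w_1$ for some $w_1>c$, on $A_x\times\cB(y,R)\cup\cB(x,R)\times A_y$: any pair on this set with value $c$ would be a local minimum of $d$ (being a minimum over the neighborhood $\B(x,\delta)\times\B(y,\delta)$) lying in $\M_c$ with an annular coordinate, contradicting the choice of $R$. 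Consequently there is $\e_0\in(0,w_1-c]$ such that for every $t\in(c,c+\e_0)$, any $a\in\cB(x,R)$, $b\in\cB(y,R)$ with $d(a,b)<t$ satisfy $a\in\B(x,R/2)$ and $b\in\B(y,R/2)$. I then define $\mathcal{F}$ by choosing $r_2$ a strictly increasing continuous bijection with $r_2(r)\leq r$ and $r_2(r)<R/2$ for all $r\leq c+\e_0$.

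With this data I reuse the flow invariant $N_t$ from step (6) of the proof of Theorem \ref{ThmConverse}, counting the signed weight of edges running from $\B(x,R/2)$ to $\B(y,R/2)$, now evaluated on chains of $\sRips(X;t,1,r_2(t))$ for $t\in(c,c+\e_0)$. The heart of the argument is that $N_t$ descends to a homomorphism $H_1(\sRips(X;t,1,r_2(t)))\to G$, i.e. $N_t(\partial\sigma)=0$ for every $2$-simplex $\sigma$ of the selective complex. A sign computation shows $N_t(\partial\sigma)\neq 0$ only when $\sigma$ has exactly one vertex $a\in\B(x,R/2)$, one vertex $b\in\B(y,R/2)$, and a third vertex $e$ outside both small balls; such $\sigma$ are excluded in two ways. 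If $e\in\cB(x,R)\cup\cB(y,R)$ then $e$ lies in $A_x$ or $A_y$, and the estimate $d\geq w_1>t$ on the annuli contradicts $\diam\sigma<t$. If $e\notin\cB(x,R)\cup\cB(y,R)$ then all three sides of $\sigma$ exceed $R/2>r_2(t)$, so $\sigma$ is not thin and hence not a simplex of $\sRips(X;t,1,r_2(t))$ by Definition \ref{DefSRips}; this is exactly where selectivity is essential. Granting the descent, if $c$ were emergent-regular for $H_1$ then for $s<c<t$ close to $c$ the class $[\alpha]$ would be hit by $[\beta]$ with $\beta$ a cycle at scale $s<c$; all edges of $\beta$ are shorter than $c$, so by the local-minimum property none joins $\B(x,R/2)$ to $\B(y,R/2)$, giving $N_t(\beta)=0$, whereas the path edges of $\alpha$ are likewise shorter than $c$ and do not cross, leaving $\langle x,y\rangle$ as its only crossing edge, so $N_t(\alpha)\neq 0$. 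This contradiction places $c$ in the emergent $H_1$ spectrum of $\mathcal{F}$.

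The main obstacle is precisely the descent of $N_t$ to $H_1$: in ordinary Rips complexes the ``far'' filling triangles (third vertex away from both $x$ and $y$) are present and destroy the invariant, which is why $c$ may go undetected there, while the ``annular'' triangles must instead be ruled out by the uniform buffer $w_1>c$. Balancing these two exclusions -- one supplied by the selectivity parameter $r_2<R/2$ and the other by the compactness estimate afforded by the finiteness of $\M_c$ -- is the crux; the linearity and well-definedness of $N_t$, and the $H_0$ alternative, are then routine.
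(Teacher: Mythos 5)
Your proof is correct and takes essentially the same route as the paper's own: the same crossing-number invariant $N_t$ adapted from the proof of Theorem \ref{ThmConverse}, the same annular buffer $w_1>c$ extracted from compactness and the finiteness of $\M_c$, the same choice $r_2<R/2$ so that the ``far'' filling triangles are excluded by thinness while the ``annular'' ones are excluded by the buffer, and the same non-surjectivity conclusion across $c$. The only cosmetic differences are your explicit $H_0$ dichotomy (the paper instead assumes WLOG that $c$ is not in the $H_0$ spectrum) and your building of $\alpha$ from a strictly $s$-sequence with $s<c$ rather than a strictly $c$-sequence.
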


The following proof shares some of the setup with the proof of \textbf{(b)} of Theorem \ref{ThmConverse}. The general idea is inspired by the proof of the main result of \cite{ZV4}.

\begin{proof}
Choose $(x,y)\in \M_c$. Without loss of generality we can assume $c$ is not in $H_0$ spectrum. Thus $c>0$ and there exists a finite strictly $c$-sequence $x=x_0, x_1, \ldots, x_m=y$ in $X$. Let $\alpha$ be the simplicial cycle corresponding to the finite $c$-loop $x=x_0, x_1, \ldots, x_m=y, x$. We claim that there exists function $r_2$ such that $[\alpha]\in H_1\big(\sRips(Y; r,1,r_2(r))\big)$ is nontrivial for $r\in (c,c+\e)$ for some $\e>0$.
\begin{enumerate}
 \item Choose $R < c/4$ such that $d(x',y')> c,\  \forall x' \in \cB(x,R)\setminus \{x\}, \forall y' \in \cB(y,R)\setminus \{y\}$.
 
 \item Define compact sets $A_x= \cB(x,R) \setminus \B(x,R/2)$ and $A_y= \cB(y,R) \setminus \B(y,R/2)$. Note that by compactness and finiteness of $\M_c$, there exist real numbers $w_1<w_2$ such that $d(A_x \times \cB(y,R) \cup \cB(x,R) \times A_y) \subseteq [w_1, w_2] \subset (c,\infty).$
 
 \item Choose $\e > 0$ so that for each $x' \in \cB(x,R)$ and $y' \in \cB(y,R)$ with $d(x', y') \in (c, c + \e)$ we have $x' \in \B(x,R/2)$ and $y' \in \B(y,R/2)$.
 
 \item Choose $r_2$ so that $r_2(c+ \e) < R/2$. This is the only additional condition that will be imposed on $r_2$. 

\item Let $r\in (c,c + \e)$. For each $1$-chain $\beta = \sum_{j=1}^k \mu_j, \sigma_j$ in $\sRips(X;r,1,r_2(r))$ (i.e., $\mu_j \in G$ and $\sigma_j$ is an oriented $1$-simplex in $\sRips(X;r,1,r_2(r))$) we define the invariant $N_r(\beta)$ as follows. Define $F_r$ as the collection of those indices $j \in \{1,2,\ldots, k\}$ for which the first vertex of  $\sigma_j$ is in $\B(x,R/2)$ and the second is in $\B(y,R/2)$. Similarly, let $L_r$ be the collection of those indices $j \in \{1,2,\ldots, k\}$ for which the second vertex of  $\sigma_j$ is in $\B(x,R/2)$ and the first is in $\B(y,R/2)$. Define 
 $$
 N_r(\beta) = \sum_{j\in F_r} \mu_j - \sum_{j \in L_r} \mu_j \in G.
 $$
 
 \item Fix a scale $r\in (c,c + \e)$. We now demonstrate that the boundary of a $2$-cycle in $\sRips(X;r,1,r_2(r))$ has even $N_r$. By additivity it suffices to consider a single $2$-simplex. A simple case analysis shows that a $2$-simplex in $\sRips(X;r,1,r_2(r))$ has an odd $N_r$ if:
\begin{itemize}
 \item One vertex, say $a_r$, in $\B(x,R/2)$.
  \item One vertex, say $b_r$, in $\B(y,R/2)$.
   \item One vertex, say $z_r$, in $X \setminus \Big(\B(x,R/2) \cup \B(y,R/2) \Big )$. By (2) this implies $z_r \in X \setminus \Big(\B(x,R) \cup \B(y,R) \Big )$.
\end{itemize}
This would mean all three pairwise distances in such a simplex would be larger that $R/2$, a contradiction with (4). Hence such $2$-simplex does not exist.

\item Observe that $N_r(\alpha)=1$ and thus $\alpha$ can not be expressed as a boundary of a $2$-chain in $\sRips(X;r,1,r_2(r))$ by (6). This proves our claim that $[\alpha]\in H_1\big(\sRips(Y; r,1,r_2(r))\big)$ is nontrivial for $r\in (c,c+\e)$. As for each $r'<c$ every $1$-chain in  $\sRips(Y; r',1,r_2(r'))$ has trivial $N_r$, we also conclude $[\alpha]$ emerges at $c$. This concludes the proof.
\end{enumerate}
\end{proof}

The following theorem states that if the distance function $d$ attains a positive local minimum in at most finitely many pairs of points in $X$, then all local minima of $d$ can be detected using persistence with a single selective Rips filtration $\mathcal{F}$.

\begin{theorem}
\label{ThmSRips2} 
Assume $X$ is a compact metric space, $\locmin(d)$ is finite, and for each positive $c \in \locmin(d)$ the set $\M_c$ is finite. Then there exists a filtration $\mathcal{F}$ with function $r_2$, such that each element of $\locmin(d)$ is either a critical value of $H_0$ or an emergent critical value of $H_1$ arising from filtration $\mathcal{F}$. In particular, $\locmin(d)$ is the union of $H_0$ spectrum and emergent $H_1$ spectrum of $X$ via filtration $\mathcal{F}$.
\end{theorem}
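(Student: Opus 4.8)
The plan is to deduce Theorem \ref{ThmSRips2} from finitely many applications of Theorem \ref{ThmSRips1}, the point being that a single admissible function $r_2$ can be made to satisfy all the resulting constraints at once. Since $\locmin(d)$ is finite, I would list its positive elements as $c_1 < c_2 < \cdots < c_N$. The element $0 \in \locmin(d)$ is the unique emergent-critical value of $H_0$, so it is detected no matter which $\mathcal{F}$ we choose; likewise, any $c_i$ already lying in the $H_0$ spectrum needs no work, because the $H_0$ persistence of $\mathcal{F}$ agrees with that of the Rips filtration (their $1$-skeleta coincide) and hence is independent of $r_2$. Thus the substance concerns the $c_i$ that are \emph{not} in the $H_0$ spectrum.

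For such a $c_i$ I would revisit the proof of Theorem \ref{ThmSRips1} and record that, \emph{before} any choice of $r_2$, it fixes a radius $R_i \in (0, c_i/4)$, a threshold $\e_i > 0$, a finite $c_i$-cycle $\alpha_i$ through a selected pair $(x_i,y_i) \in \M_{c_i}$, and the scale-independent flux functional $N^{(i)}$ counting signed edges between $\B(x_i,R_i/2)$ and $\B(y_i,R_i/2)$. Shrinking each $\e_i$ if needed, I would arrange the windows $(c_i-\e_i, c_i+\e_i)$ to be pairwise disjoint, which only strengthens the guaranteed properties. The one demand that proof places on the filtration is the single inequality $r_2(c_i+\e_i) < R_i/2$, which excludes from $\sRips(X;r,1,r_2(r))$ every ``fat'' triangle capable of killing $N^{(i)}$ in the window. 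The decisive observation is that every step of that argument — nontriviality of $[\alpha_i]$ just above $c_i$ from $N^{(i)}(\alpha_i)=1$, and emergence at $c_i$ from the vanishing of $N^{(i)}$ on cycles living below $c_i$ — is local to $\B(x_i,R_i/2)\cup\B(y_i,R_i/2)$ and monotone in $r_2$: shrinking $r_2$ only thins the complex further and therefore preserves the obstruction.

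These two features make the gluing routine. The conditions $\{r_2(c_i+\e_i) < R_i/2\}_{i=1}^N$ are finitely many upper bounds, so I would simply take the linear $r_2(r)=\lambda r$ with $0 < \lambda < \min_i \frac{R_i/2}{c_i+\e_i}$; since $R_i < c_i/4$ forces each ratio below $1$, this $\lambda$ is $<1$, making $r_2$ a strictly increasing continuous bijection of $[0,\infty)$ with $r_2(r)\le r$. With this one $r_2$ all the inequalities hold simultaneously, the mutually independent arguments of Theorem \ref{ThmSRips1} run in parallel, and each $c_i$ outside the $H_0$ spectrum becomes an emergent-critical value of $H_1$ of $\mathcal{F}$. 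Combined with the trivial cases this gives the forward inclusion $\locmin(d) \subseteq H_0\text{-spectrum} \cup (\text{emergent } H_1\text{-spectrum})$.

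For the ``in particular'' clause I would prove the reverse inclusion: the $H_0$ spectrum of $\mathcal{F}$ equals that of the Rips filtration and so lies in $\locmin(d)$ by Theorem \ref{th:1}, while the emergent $H_1$ spectrum lies in $\overline{\locmin(d)}$ by Theorem \ref{ThmDescSRips}, and finiteness of $\locmin(d)$ gives $\overline{\locmin(d)}=\locmin(d)$; equality of sets follows. The only genuine obstacle is \emph{simultaneity}: a priori, shrinking $r_2$ to detect one $c_i$ could spoil the detection of another $c_j$. I expect this to be resolved entirely by the locality and $r_2$-monotonicity of the flux obstruction highlighted above, which ensure that the finitely many local detection arguments neither impose incompatible requirements on $r_2$ nor interfere with one another.
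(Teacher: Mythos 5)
Your proposal is correct and follows essentially the same route as the paper: reduce to Theorem \ref{ThmSRips1}, observe that the only demand it places on $r_2$ is the single upper bound from condition (4) of its proof, and satisfy the finitely many such bounds simultaneously. The paper's own proof is a three-line version of exactly this; your additions (the explicit choice $r_2(r)=\lambda r$, the locality/monotonicity justification that the detection arguments do not interfere, and the reverse inclusion via Theorems \ref{th:1} and \ref{ThmDescSRips}) merely make explicit what the paper treats as immediate.
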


\begin{proof}
Assume scale  $c\in \locmin(d)$ is not in $H_0$ spectrum. By Theorem \ref{ThmSRips1} scale  $c$ is in emergent $H_1$ spectrum via $\mathcal{F}$ if condition (4) of the proof of Theorem \ref{ThmSRips1} holds. As it is easy to satisfy finitely many such conditions simultaneously, there exists $r_2$ such that the  theorem holds. 
\end{proof}

\section{(Counter)examples}
\label{SectEx}

In this section we present three examples that demonstrate the necessity of some of the assumptions in our results. 

\begin{example}
 \label{Ex1}
\textbf{Closed Rips filtrations may induced critical values not in $\overline{\locmin(d)}$.} The left part of Figure \ref{FigEx1} shows space $A$ as a solid curve. The dashed circular arcs are parts of circles with centers at points $a$ and $b$. Note that the pair $(a,b)$ is not a local minimum of $d$, neither is it in $\overline{\locmin(d)}$. However, the closed Rips filtration of $A$ still has $d(a, b)$ as an emergent $H_1$ value as $\cRips(A, d(a,b))$ contains the edge $\langle a,b \rangle$ as a maximal simplex. This demonstrates that Theorem \ref{ThmDesc} does not hold for closed Rips filtrations. This example appeared first in \cite{ZV2}.

The non-trivial class $H_1(\cRips(A,d(a,b)))$ emerging at $d(a, b)$ is trivial in $H_1(\cRips(A,r))$ for all $r> d(a, b)$, which means its lifespan is zero. As open and closed Rips filtrations are $0$-interleaved, the zero-lifespan elements (also referred to as ephemeral summands) are the only way in which the spectrum of a closed Rips filtration can be larger than the spectrum of the open Rips filtration, see \cite{ChaObs, Cha2} for details and definitions of the mentioned terms.
\end{example}

\begin{example}
 \label{Ex2}
 \textbf{A local minimum $c$ of $d$ may not be detectable if $\M_c$ is infinite.} The right part of Figure \ref{FigEx1} shows space $B$. It consists of three line segments, two of which are parallel. The distance function $d$ attains a local minimum at pair $(a,b)$, yet for each selective Rips filtration of $B$ the scale $d(a,b)$ is not in spectrum of $B$. This demonstrates that the requirement $|\M_c| < \infty$ in Theorem \ref{ThmSRips2}  is necessary.
\end{example}

 \begin{figure}
     \label{FigEx1}
    \centering
    \includegraphics{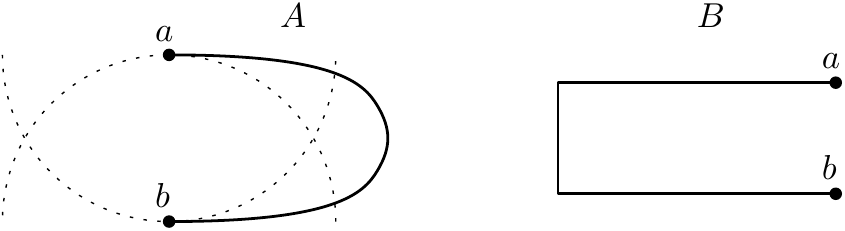}
    \caption{ Spaces of Examples \ref{Ex1} (left) and \ref{Ex2} (right), both are subspaces of the Euclidean plane. }
\end{figure}

\begin{example}
 \label{Ex3}
 \textbf{$H_1(\Rips(X,r))$ may have infinite rank even if $X$ is compact.} Example $Z = U \sqcup L$ is sketched on the left of Figure \ref{FigEx3}. As a set it consists of lower points $L$ and upper points $U$ in the plane, defined as
 $$
 L=\big\{x_j \mid j\in \{\infty, 1,2, \ldots\}\big\}, \quad U= \big\{y_j \mid j\in \{\infty, 1,2, \ldots\}\big\}.
 $$ 
 The metric used is the Manhattan metric denoted by $d=d_1$, i.e., 
 $$
 d((a_1, a_2), (b_1, b_2))= |a_1 - b_1| +|a_2 - b_2|.
 $$ 
 The details on local distances are indicated on the right side of Figure \ref{FigEx3}. Note that $\lim_{j \to \infty} x_j = x_\infty$ and $\lim_{j \to \infty} y_j = y_\infty$. As $\sum_{j=1}^\infty 2^{-j}=1$ we see that $d(x_1, x_\infty)=d(y_1, y_\infty)=5/4$. Also note that:
\begin{enumerate}
\item $d(x_j, x_k) \leq 5/4, \forall j,k \in \{\infty, 1,2, \ldots\}$.
 \item  $d(x_1, y_1)=r-1$ and also $d(x_j,y_j)<r, \ \forall j<\infty$.
 \item $d(x_j, y_{j+1})=d(y_j, x_{j+1})=r + 2^{-(n+2)}>r, \ \forall j<\infty$.
 \item Observations (2) and (3) imply $d(x_j, y_k) < r$ iff $j=k$.
\end{enumerate}
Fixing $r \geq 5/4$ we make the following observations for $\Rips(Z,r)$:
\begin{itemize}
 \item $\Rips(Z,r)$ contains the full simplex on $L$ as $\diam(L) = 5/4$ by (1). On a similar note, $\Rips(Z,r)$ contains the full simplex on $U$.
 \item Observations (2), (3), and (4) imply that the only simplices connecting $U$ and $L$ in $\Rips(Z,r)$ are the ``vertical'' edges $\langle x_j, y_j \rangle$ for all $j < \infty$.
\end{itemize}
As a result $H_1(\Rips(Z,r),\mathbb{Z}) \cong \bigoplus_{j=1, 2, \ldots} \mathbb{Z}$ is the countable direct sum of integer groups, which is not finitely generated. 
 
 This example complements analogue examples on closed Rips complexes in \cite{Cha2}. It also shows that finiteness of $\locmin(d)$ is required in Corollary \ref{CorFinRankFinal}. On a different note, observe that $r=5/4$ is an emergent critical scale of $H_1$ despite not being a local minimum of $d$. 
\end{example}

 \begin{figure}
    \centering
    \includegraphics{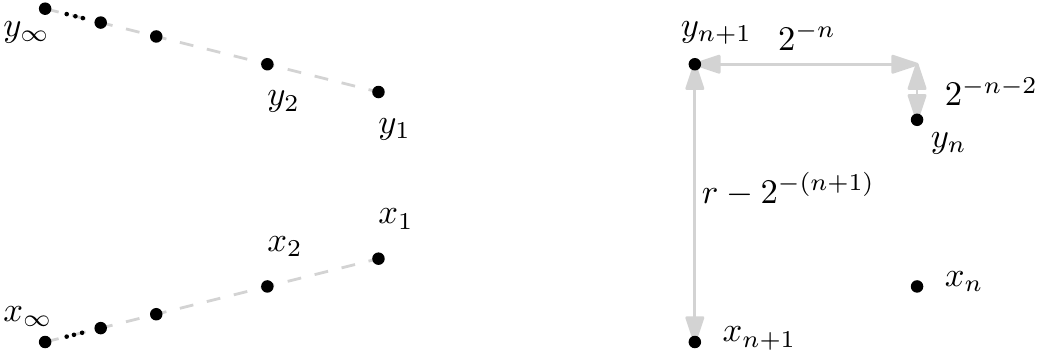}
    \caption{Space $Z$ of Example \ref{Ex3} on the left, and details of tis local configuration on the right. The metric used is $d_1$.}
    \label{FigEx3}
\end{figure}

\begin{example}
The results of this paper imply that a local minimum of $d$ can  ``contribute'' two potential changes to persistence (as a critical edge): an increase in $H_1$ or a decrease in $H_0$. However, it turns out that when $\M_c$ is infinite, it can actually affect persistent homology in any way, i.e., terminating or giving rise to homology in any dimension. Hence a general theory of critical edges is much more complicated and not at all analogous to persistence on finite metric spaces.  
 
 For example, let $A$ be a planar circle of radius $10$ and $x\in A$. Define 
 $$
 B = A \times \{0,1\} \cup \{x\} \times [0,1] \subset \RR^3
 $$
 as two parallel circles connected by a line segment.
 Observe that $B$ is a connected space with the first Betti number $2$. By Theorem \ref{ThmReconstr},  $H_1(\Rips(B,r))$ is also of rank $2$ for $r < 1$. However, for $\in (1,2)$ the local minima attained at the uncountably many pairs $\{y\}\times \{0,1\}$ for $y\in A \setminus \{x\}$ ``stack up'' in circle to terminate a one-dimensional homology class and result in a decrease in the rank $H_1(\Rips(B,r))$ to $1$ at $r=1$. 
\end{example}


\end{document}